\documentclass[11pt]{amsart}
\topmargin 0.5in				
\usepackage{amssymb,amsmath}
\usepackage{graphicx}
\usepackage{amsmath}
\usepackage{amsfonts}
\usepackage{amsthm}
\usepackage{amssymb,latexsym}
\usepackage{fixmath}
\usepackage{mathrsfs,amsbsy}
\usepackage{graphicx}
\usepackage{caption}
\usepackage{subcaption}			
\usepackage{booktabs}  
\usepackage{color}
\usepackage{algorithm,algorithmic}
\usepackage{multirow}
\usepackage{url}
\usepackage{float}
\usepackage{bm}
\usepackage{caption}
\usepackage{makecell}
\usepackage{xr}
\usepackage{mathrsfs,amsbsy}
\usepackage{dsfont}
\usepackage{enumerate}
\usepackage{eucal}
\usepackage{color} 
\definecolor{mygreen}{RGB}{28,172,0} 
\definecolor{mylilas}{RGB}{170,55,241}
\newtheorem{theorem}{Theorem}[section]
\newtheorem{lemma}[theorem]{Lemma}
\newtheorem{remark}[theorem]{Remark}

\newcommand{\rev}[1]{{\color{black}#1}}

\date{\today}
\title[]{ High-Order Enriched Finite Element Methods for \rev{Elliptic Interface} Problems with Discontinuous Solutions}
\author{}
\date{\today}	
\author{ Champike Attanayake$^\ddagger$, So-Hsiang Chou$^\dagger$ and Quanling Deng$^\sharp$}
\thanks{$\dagger$
 Department of Mathematics and Statistics, Bowling Green
State University, Bowling Green, OH, 43403-0221. {{\tt email:chou@bgsu.edu}};\,$\ddagger$ Department of Mathematics,
Miami University
Middletown, OH 45042. {{\tt
e-mail:attanac@muohio.edu}};\,$\sharp$ School of Computing, Australian National University, Canberra, ACT 2601, Australia. {{\tt
e-mail:quanling.deng@anu.edu.au}}}

\begin{document}
\maketitle
\begin{abstract}
\rev{Elliptic interface problems whose solutions are $C^0$ continuous have been well studied over the past two decades.
The well-known numerical methods include the strongly stable generalized finite element method (SGFEM) and immersed FEM (IFEM).
In this paper, we study numerically a larger class of elliptic interface problems where their solutions are discontinuous.
A direct application of these existing methods fails immediately as the approximate solution is in a larger space that covers discontinuous functions.
We propose a class of high-order enriched unfitted FEMs to solve these problems with implicit or Robin-type interface jump conditions. }
We design new enrichment functions that capture the imposed discontinuity of the solution while keeping the condition number from fast growth. A linear enriched method in 1D was recently developed using one enrichment function and we generalized it to an arbitrary degree using two simple discontinuous one-sided enrichment functions. The natural tensor product extension to \rev{the 2D case is} demonstrated. Optimal order convergence in the $L^2$ and broken $H^1$-norms are established.
\rev{We also establish superconvergence at all discretization nodes (including exact nodal values in special cases).}
Numerical examples are provided to confirm the theory. Finally, to prove the efficiency of the method for
practical problems, the enriched linear, quadratic, and cubic
elements are applied to a multi-layer wall model for drug-eluting stents in which zero-flux jump conditions and implicit concentration interface conditions are both present.
\end{abstract}
{\bf \small Key Words.}\keywords{\small{ generalized finite element method, elliptic interface, implicit interface jump condition, Robin interface jump condition, linear and quadratic finite elements.}
\maketitle
\section{Introduction}
Consider the interface two-point boundary value problem
\begin{equation}
\begin{cases}
-(\beta(x){u}'(x))'+w(x){u}(x)= f(x), &  x\in (a,\alpha)\cup (\alpha,b),  \label{eqn:First}\\
{u}(a)= {u}(b) = 0,
\end{cases}
\end{equation}
where $w(x)\geq 0$, and  $0<\beta\in C[a,\alpha]\cup C[\alpha,b]$ is discontinuous across the interface $\alpha$
with the jump conditions on ${u}$ and its flux $q:=-\beta {u}'$:
\begin{align}\label{implicit}
[{u}]_{\alpha} &=\lambda F(q^+,q^-),\quad \lambda\in \mathbb R,\, F:[c_1,d_1]^2\to \mathbb R, \\
\label{implicit2}
[\beta {u}']_\alpha &= f_\alpha,  \quad f_\alpha\in\mathbb R,
\end{align}
where the jump quantity
 \[    [s]_\alpha:=s(\alpha^+)-s(\alpha^-), \quad s^{\pm}:=s(\alpha^\pm):=\lim_{\epsilon\to 0^+} s(\alpha\pm \epsilon).\]
The primary variable ${u}$ may stand for the pressure, temperature, or concentration in a medium with certain physical properties
and the derived quantity $q:=-\beta {u}'$ is the corresponding Darcy velocity, heat flux, or concentration flux, which is equally important.
 The piecewise continuous $\beta$ reflects a nonuniform material or medium property (we do not require $\beta$ to be piecewise constant).
 \rev{The function
$w(x)$ reflects the surroundings of the medium characterising the coefficient of the reaction term.}
The case of $\lambda=0$ is widely studied, while the case of $\lambda>0$ gives rise to a more difficult situation. For example, the case of rightward concentration flow \cite{Wang, Zhang1, Zhang2} imposes
\begin{equation}
\begin{cases}\label{jmp1}
[{u}]_\alpha&=\lambda (\beta {u}')(\alpha^-)\\
[\beta {u}']_\alpha &= 0,
\end{cases}
\end{equation}
which generates an implicit condition since the left-sided derivative is unknown. Implicit interface conditions abound in higher dimensional applications \cite{Ammari, Hahn, Kim, Kruit1}.  For definiteness, we will study a class of efficient enriched methods for problem  \eqref{eqn:First} under the jump conditions \eqref{jmp1},


The model problem \eqref{eqn:First}, allowing the solution to be discontinuous at the interfaces,
is a more general form of interface problem than the ones studied by Babu\v{s}ka \textit{et. al.} using SGFEM \cite{Babuska2,Babuska1,kergrene2016stable}
and by Li \textit{et.al.} using immersed finite element method (IFEM) \cite{LI,LI:2006,Li2003,Li2004}.
In these works, the interface problem is assumed to have a continuous solution at the interfaces. For the case of the discontinuous solution at the interfaces, some IFE methods
have also been developed, see for example \cite{zhang2019strongly,zhang2020strongly}.
A large class of the methods is developed based on the unfitted meshes, which has been demonstrated to be more efficient than methods using fitted meshes, especially when the interface is moving \cite{he2013immersed}; see also \cite{Bordas,Chou, guo2019group, guo2018nonconforming, jo2019recent, Wang}.

The generalized FEMs (GFEM) were first introduced to capture certain known features of the solution to the crack problem  \cite{babuvska2004generalized, Belyt, Fries, Moes}.
However, it has been shown that GFEM suffers from a lack of robustness with respect to \rev{mesh} configurations and bad conditioning.
\rev{In general, the condition numbers of GFEM can grow with an order $\mathcal{O}(h^{-4})$ where $h$ characterizes the mesh size.
This is two orders of magnitude worse than the standard FEM (known to be of order $\mathcal{O}(h^{-2})$).
To resolve this issue, SGFEM has been developed \cite{Babuska1,Babuska2,kergrene2016stable} with an extra feature of robustness with respect to the mesh configurations.
Further development of SGFEM has been active.  For example,  \cite{zhang2020stable} extends the linear SGFEM to quadratic-order; \cite{Deng} extends the method for eigenvalue interface problems, and \cite{zhang2021generalized} generalizes the SGFEM idea to isogeometric analysis with B-spline basis functions.
}

\rev{In 1D, when the solution to the underlying interface problem is continuous, a single enrichment function associated with the interface
can be used for arbitrary high-order elements in the SGFEM.
However, for an interface problem whose solution is discontinuous at interfaces,
the natural extension with one enrichment function at each interface fails.
A more sophisticated construction of enrichment functions is desired especially for high-order elements.
This motivates the present work.
}

 In our \rev{enriched FEM (might also call it as a GFEM)}, the approximation finite element space $V^{enr}_h$ takes the form:
\begin{equation}\label{struc}
  V^{enr}_{h}:=S_h+ V_E=\{p_{h}+q_{h}\psi:\,p_{h},q_{h}\in S_{h},\psi\in F_{enr}\}
  \end{equation}
where $S_h$ is a standard finite element space (e.g., $\mathbb P_k$-conforming, $k\geq 1$),
\begin{equation}
V_E=\{q_{h}\psi:\,q_{h}\in S_{h},\psi\in F_{enr}\},
\end{equation}
 and the function $\psi$ is from the enrichment function space
\begin{equation}\label{functions}   F_{enr}:=\text{span}\{\psi_0,\psi_1,\ldots,\psi_m\}, \,\, \text{dim} (F_{enr})=m+1.
\end{equation}
Here the basis functions \rev{$\psi_i$ capture the} interface condition(s) at $\alpha$, e.g., zero or nonzero jump of the function value across $\alpha$.
For example, for \rev{a continuous solution case}, a single ($m=0$) enrichment function suffices, whereas we show in this paper that for discontinuous solution case, we need two enrichment functions ($m=1$) defined in \eqref{eqn:lin1}.
There are some distinct features about $V_h^{enr}$ in this case.
Firstly,
the subspaces $S_h$ and $V_E$ have nonempty intersection, i.e., $S_h \cap V_E \ne \emptyset$.
\rev{This nonempty intersection implies linear dependence on the set of functions that consist of the basis functions of $S_h$ and those of $V_E$.
We show that the space $V^{enr}_{h}$ can be characterised by this set of functions when we remove the bubble basis functions associated with interfaces from $S_h$.
}
Secondly, the local shape basis of IFEM utilizes information on discontinuous $\beta$ while GFEM does not.
\rev{We propose an enriched method that} does not require the discontinuous diffusion coefficient to be piecewise constant, which is an advantage. This difference affects the overhead and complexity of the
convergence analysis. To carry out the error analysis in a conforming GFEM, we use the principle that the error in the finite element solution $u_h$ should be bounded by the approximation error in the finite element space $V^{enr}_h$:
\begin{equation}\label{motive}
     ||u-u_h||\leq C \inf_{v \in V^{enr}_h}||u- v ||.
\end{equation}
The optimal error estimate is established by demonstrating the existence of an optimal order approximate piecewise polynomial in $V^{enr}_h$.
\rev{For the case with $C^0$ continuous solutions,
this approximability was established by constructing an interpolating polynomial; see Deng and Calo \cite{Deng} for details.
With this in mind, the authors demonstrated the optimal convergence of GFEM solutions in {\it all} $\,\mathbb P_p-$conforming spaces ($p\geq 1$) enriched by the well-known hat-like enrichment function (cf. Eq. \eqref{eqn:lin} below). On the other hand, Chou et. al. \cite{ChouAttan2} used a single enrichment
 function to handle problem \eqref{eqn:First} with $[u]_\alpha\neq 0$ using the linear GFEM $(p=1)$.
 In this paper, we use two one-sided enrichment functions for each interface
and show the effectiveness and convergence of the associated GFEM of all orders, i.e., $p\geq 1$.}
In addition to demonstrating optimal order convergence in the $L^2$ and broken $H^1$ norms, we also show superconvergence of nodal errors.

\rev{The organisation of the rest of the paper is as follows.}
In Section 2, we state the weak formulation for the implicit interface condition problem \eqref{eqn:First} with \eqref{jmp1}, and define enrichment functions and spaces.
In Theorem \ref{thm2.6},
we show optimal order convergence in the $L^2$ and broken $H^1$ norms. Furthermore, in Theorem \ref{2order} we show $2p$ order convergence at the nodes and the exactness of the approximate solution at nodes for the piecewise constant diffusion case.
\rev{Section 3 concerns its extension to a special case of 2D problems where the interface is a straight parallel (to a boundary) line.}
In Section \ref{NumExample}, we provide numerical examples of a porous wall model to demonstrate the effectiveness
of the present GFEM and confirm the convergence theory. Linear, quadratic, and cubic elements are tested. Furthermore, following the viewpoint of the SGFEM \cite{Babuska1, Babuska2, Deng}, we compare the condition numbers of our (discontinuous solution) method with those in the continuous solution case \cite{ATTChou2021}, and numerically show that they are comparable for the same mesh sizes.  Finally, in Section 5 we give some concluding remarks.

\section{Enrichment Functions and Spaces}
\subsection{Weak Formulation}
Let $I^-=(a,\alpha)$,$I^+=(\alpha,b)$, $I=(a,b)$, and define
\[  H_{\alpha,0}^1(I)=\{v\in L^2(I): v\in H^1(I^-)\cap H^1(I^+), v(a)=v(b)=0\}.\]
We use conventional Sobolev norm notation. For example,  $|u|_{1,J}$  denotes the usual $H^1$-seminorm for $u\in H^1(J)$, and $||u||^2_{i,I^-\cup I^+}=||u||^2_{i,I^-}+||u||^2_{i, I^+}, i=1,2$ for $u\in  H_\alpha^2(I)$, where
\[
 H_\alpha^2(I)= H^2(I^-)\cap H^2(I^+).\]
 The space $H_{\alpha,0}^1(I)$ is endowed with the $||\cdot||_{1,I^-\cup I^+}$ norm, and $H_\alpha^2(I)$ with the $||\cdot||_{2,I^-\cup I^+}$ norm. The higher-order spaces
$H_\alpha^m(I)$ and $H_{\alpha,0}^m(I), m\geq 3$ are similarly defined.
With this in mind, the weak formulation of the problem \eqref{eqn:First} under (\ref{jmp1}) is: Given $f \in L^{2}(I)$, find ${u}\in H_{\alpha,0}^{1}(I)$ such that
\begin{equation}\label{ExactWeak}
a({u},v) = (f,v)\quad \forall v\in H_{\alpha,0}^{1}(I),
\end{equation}
where
\begin{align*}
a(u,v) =& \int_{a}^{b}\beta(x){u}'(x)v'(x)dx+\int_{a}^{b}w(x){u}(x)v(x)dx+\frac{[{u}]_\alpha[v]_\alpha}{\lambda},\\
(f,v) =& \int_{a}^{b}f(x)v(x)dx.
\end{align*}
 The above weak formulation can be easily derived by integration-by-parts with \eqref{jmp1}. Since $\lambda>0$, the bilinear form $a(\cdot,\cdot)$ is coercive and is bounded due to Poincar$\acute{e}$ inequality. By the Lax-Milgram theorem, a unique solution ${u}$ exists. Throughout the paper, we assume that the functions $\beta$, $f$, and $w$ are such that
 the solution ${u}\in H^p_\alpha(I), p\geq 2$.
  \subsection{Enrichment Functions}\label{sub:enrichment functions}
 We now introduce an approximation space for the solution ${u}$. Let $a=x_{0}<x_{1}<\ldots <x_{k}<x_{k+1}<\ldots<x_{N}=b$ be a partition of $I$ and the interface point
$\alpha\in(x_{k},x_{k+1})$ for some $k$. As usual, the mesh size $h:=\max_i h_i, h_i=x_{i+1}-x_i,i=0,\ldots, N-1$. Define the two one-sided enrichment functions
\begin{equation}
\psi_0(x) :=
\begin{cases}
0   &  x\in [a,x_{k}]\\
m_1(x-x_k)&  x\in [x_{k},\alpha)  \label{eqn:lin1}\\
0   &  x\in (\alpha,b]
\end{cases}
\text{ and }
\psi_1(x) :=
\begin{cases}
0   &  x\in [a,\alpha)\\
m_2(x-x_{k+1}) &  x\in (\alpha,x_{k+1}]  \\
0   &  x\in [x_{k+1},b]
\end{cases}
\end{equation}
where
\begin{equation}\label{slopes}
m_1=\frac{1}{\alpha-x_{k}},\quad m_2=\frac{1}{\alpha-x_{k+1}}.
\end{equation}

{\bf Remark 2.1.}
\begin{itemize}
 \item It is the enrichment function space $F_{enr}:=\text{span}\{\psi_0,\psi_1\}$ that matters. Any two basis functions of $F_{enr}$ qualify, i.e., any pairs of nonzero $m_1$ and $m_2$
 will guarantee convergence, as we shall show below. This space can also be spanned by one continuous and one one-sided function.

 For example, the familiar continuous
\begin{equation}
\tilde \psi_0(x) :=
\begin{cases}
0   &  x\in [a,x_{k}]\\
\tilde m_1(x-x_k)&  x\in [x_{k},\alpha)  \label{eqn:lin10}\\
\tilde m_2(x-x_{k+1})& x\in [\alpha,x_{k+1}]\\
0&   x\in (\alpha, b]
\end{cases}
\text{ and a discontinuous }\quad
\tilde \psi_1(x) :=\psi_1(x)
\end{equation}
where
\begin{equation}\label{slopesC}
\tilde m_1=\dfrac{\alpha-x_{k+1}}{x_{k+1}-x_{k}},\quad   \tilde m_2=\dfrac{(\alpha-x_k)}{(x_{k+1}-x_k)}.
\end{equation}

\item We will adopt choice \eqref{slopes} since it makes the ensuing error analysis more transparent and simpler.
\item Note that the slopes $\tilde m_1, \tilde m_2$ in choice \eqref{slopesC} are uniformly bounded by $1$, while the slopes $m_1,m_2$ in choice \eqref{slopes} are not. We show in Appendix \ref{scale_reason} that the bounded type may lead to a system of out-of-scale finite element equations while choice \eqref{slopes} does not. However, after a diagonal scaling, their resulting
preconditioned systems have comparable condition numbers (cf. Section \ref{NumExample}).
\end{itemize}
\color{black}

 Let us describe the enriched space associated with $\psi_i,i=0,1$. Let $\bar I=\cup_0^{N-1}I_{i},I_i=[x_i,x_{i+1}]$ and let $S^p_{h}$ be the standard $\mathbb P_p,p\geq 1$ conforming finite element space
\begin{align}\label{P1}
S^{p}_{h} &= \{
v_{h}\in C(\bar I) : v_{h}|_{I_i} \in \mathbb P_{p}, i=0,\ldots,N-1, v_h(a)=v_h(b)=0\}\\
\notag &=\text{span}\{\phi_j,j\in {\mathcal N}^p_h\},
\end{align}
where $\phi_i$'s are the Lagrange nodal basis functions of order $p$, and where the nodal index set $\mathcal{N}^p_h:=\{1,2,\cdots,pN-1\}$.

We denote the usual $\mathbb P_p$-interpolation operator by ${\mathcal I}_h:C(\bar I)\to S^{p}_h$,
\[     {\mathcal I}^{p}_h g=\sum_{i=1}^{pN-1}g(t_i)\phi_i, \]
where $t_{i}, i\in \mathcal{N}^p_h$ are the nodes such that $\phi_i(t_j)=\delta_{ij}$. For each element $\tau=I_i$, the local interpolation operator ${\mathcal I}_{\tau}^{p}$ is
\begin{equation}
{\mathcal I}^{p}_\tau g:={\mathcal I}^{p}_hg\bigg|_\tau=\sum_{j\in \mathcal N^{p}_{\tau}}g(t_j)\phi_j,
\end{equation}
and $\mathcal N^{p}_{\tau}\subset \mathcal N^{p}_{h}$ is the set of nodes associated with $\tau$.
Define the enriched finite element space

\begin{align}\label{enriched}
\overline{S}^{p}_{h}&=S^{p}_h+S_{h,E}=\{v_h+w_h,v_h\in S^{p}_h, w_h\in S_{h,E} \}
\end{align}
and
\[      S_{h,E}:=\{w_h: w_h=v\psi_0+w\psi_1,\, v,w,\in S_h^p\}. \]
It will be shown later that algebraic sum `+' in \eqref{enriched} cannot be a direct sum `$\oplus$' since the intersection space, $S^{p}_h\cap S_{h,E}$, is not empty.

The GFEM for problem (\ref{eqn:First}) under (\ref{jmp1}) is to find $u_{h}\in \overline{S}^p_{h}\subset H^1_{\alpha,0}$  such that
\begin{equation}\label{FEMweak}
a(u_{h},v_{h}) = (f,v_{h})\quad \forall v_{h}\in \overline{S}^p_{h}.
\end{equation}
To derive the corresponding linear algebraic system of equations, we need to form a basis for the enriched space $\overline{S}^p_{h}$, which we characterise as below.

\subsection{Structure of the enrichment finite element space}
It suffices to analyze the structure of the enriched space restricted to the interface element $[x_k,x_{k+1}]$. To this end, we first concentrate on the reference element $[0,1]$ with an interface point $\hat\alpha$. Let $\xi_i=i/p,i=0,\ldots, p,$ be the evenly distributed nodes and let the corresponding Lagrange interpolating polynomials be
\[   q_i(x)=\prod_{j\neq i} \frac{(x-\xi_j)}{(\xi_i-\xi_j)}.\]
The local enriched space is
\begin{equation}\label{enr}
 V_{h,loc}^{enr}:=V_{loc}+V_E:=\text{span}\{q_0,q_1,\ldots,q_{p-1},q_p\}+\text{span}\{q_i\psi_0,q_i\psi_1\}_{i=0}^{p}.
 \end{equation}
For $p\geq 2,$ we will refer to the $p-1$ functions $\{q_i\}_{i=1}^{p-1}$ as the bubble functions since they vanish at the endpoints. Note that $q_0(0)=1$ and $q_p(1)=1$.
We will show in Lemma \ref{bubble-free1} that the bubble functions belong to $V_E$, and consequently
\[ \text{dim} (V_{h,loc}^{enr}) = \text{dim} (V_{loc}+V_E)=2+2(p+1).\]

\begin{remark}
Note that there is no bubble function for the linear case ($p=1$). Moreover, there holds $V_{loc} \cap V_E = \emptyset.$
Thus, $V_{h,loc}^{enr}:=V_{loc} \oplus V_E$ and the dimension is $\text{dim} (V_{h,loc}^{enr}) =\text{dim} (V_{loc}+V_E)=2+2(p+1) = 6.$
\end{remark}

\begin{lemma}[\bf{Local linear dependence}] \label{bubble-free1}
 Let $p\geq 2$. For any bubble function $q_i,1\leq i\leq p-1$, we have
\begin{equation}
q_i\in V_E:=\text{span}\{q_l\psi_0,q_l\psi_1\}_{l=0}^{p},
\end{equation} i.e., there exist $s_{mj},0\leq m\leq p, j=0,1,$ such that
\[  q_i=\sum_{j=0}^1 \sum_{l=0}^p s_{lj} \psi_j q_l.\]
\end{lemma}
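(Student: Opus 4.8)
The plan is to exploit the disjoint supports of $\psi_0$ and $\psi_1$ and reduce the claim to two independent, elementary polynomial factorizations, one on each side of $\hat\alpha$. First I would record the explicit form of the enrichment functions on the reference element $[0,1]$: with $x_k\mapsto 0$, $x_{k+1}\mapsto 1$, $\alpha\mapsto\hat\alpha$, the slopes \eqref{slopes} give $\psi_0(x)=x/\hat\alpha$ on $[0,\hat\alpha)$ and $\psi_0\equiv 0$ on $(\hat\alpha,1]$, while $\psi_1(x)=(x-1)/(\hat\alpha-1)$ on $(\hat\alpha,1]$ and $\psi_1\equiv 0$ on $[0,\hat\alpha)$. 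The structural facts I rely on are that $\psi_0$ is a nonzero linear function on $[0,\hat\alpha]$ vanishing at $x=0$, that $\psi_1$ is a nonzero linear function on $[\hat\alpha,1]$ vanishing at $x=1$, and that their supports overlap only at the single point $\hat\alpha$.

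Next I would describe the two one-sided traces of $V_E$. Restricted to $[0,\hat\alpha)$, every generator $q_l\psi_1$ vanishes while $q_l\psi_0=\tfrac1{\hat\alpha}\,x\,q_l$; since $\{q_l\}_{l=0}^{p}$ is a basis of $\mathbb P_p$, the functions $\{x q_l\}_{l=0}^{p}$ are linearly independent and span $x\,\mathbb P_p$, i.e. all polynomials of degree $\le p+1$ vanishing at $0$. Symmetrically, on $(\hat\alpha,1]$ the generators $q_l\psi_0$ vanish and $\{q_l\psi_1\}_{l=0}^{p}$ span $(x-1)\mathbb P_p$, the polynomials of degree $\le p+1$ vanishing at $1$.

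The main step is then to match $q_i$ on each side simultaneously. Because $q_i$ is a bubble we have $q_i(0)=q_i(1)=0$, so $q_i=x\,r_i$ with $r_i\in\mathbb P_{p-1}$ and also $q_i=(x-1)\tilde r_i$ with $\tilde r_i\in\mathbb P_{p-1}$. Expanding in the basis, $r_i=\sum_l c_l q_l$ and $\tilde r_i=\sum_l \tilde c_l q_l$, I would set $s_{l0}=\hat\alpha\,c_l$ and $s_{l1}=(\hat\alpha-1)\,\tilde c_l$, and define $w:=\sum_{l=0}^{p}s_{l0}\,q_l\psi_0+\sum_{l=0}^{p}s_{l1}\,q_l\psi_1\in V_E$. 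By construction $w$ agrees with $q_i$ as a polynomial identity on $[0,\hat\alpha)$ and again on $(\hat\alpha,1]$, whence $w\equiv q_i$ on $[0,1]$, which is exactly the asserted representation with the coefficients $s_{lj}$.

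I do not anticipate a genuine obstacle, since the disjoint-support structure decouples the two sides and the factorizations are routine. The one point requiring a little care is consistency at the interface $\hat\alpha$: one must verify that the independently chosen left and right coefficients do not introduce a jump there. This is automatic because each one-sided representation is an identity of \emph{polynomials}, not merely of functions on a subinterval, so both pieces take the common value $q_i(\hat\alpha)$ at $x=\hat\alpha$. The same argument carries over to the physical element $[x_k,x_{k+1}]$ under the affine change of variables and yields the dimension count $\dim\bigl(V_{h,loc}^{enr}\bigr)=2+2(p+1)$ recorded after the statement.
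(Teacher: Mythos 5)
Your proof is correct, but it takes a genuinely different route from the paper's. The paper's own argument (written out for $p=2$, with the general case asserted to follow ``with more complicated indices'') expands the bubble and the generators $q_i\psi_0$ as Taylor polynomials at the endpoint $x=0$ (respectively $x=1$ on the other side), compares coefficients of powers of $x$, and reduces existence of the $s_{i0}$ to the nonsingularity of the Wronskian matrix \eqref{wron} of $\{q_0,\dots,q_p\}$ at that endpoint. You instead identify the one-sided trace of $V_E$ outright: on $[0,\hat\alpha)$ the generators $q_l\psi_0$ span $x\,\mathbb P_p$, and since $q_i(0)=0$ the factor theorem gives $q_i=x\,r_i$ with $r_i\in\mathbb P_{p-1}\subset\mathbb P_p$, so expanding $r_i$ in the Lagrange basis $\{q_l\}$ produces the coefficients explicitly (and symmetrically on $(\hat\alpha,1]$ using $q_i(1)=0$). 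What your approach buys: it is uniform in $p$ with no index bookkeeping, it is constructive rather than an existence argument via an invertible linear system, and it isolates the structural fact that the one-sided restriction of $V_E$ is exactly the set of polynomials of degree $\le p+1$ vanishing at the far endpoint --- which is also the conceptual engine behind Theorem \ref{general}, so your observation would streamline that proof as well. What the paper's route buys is that the same Wronskian mechanism is reused verbatim in the proof of Theorem \ref{general}, so the two results share machinery there. Your care at the interface point is sound but, as you note, essentially automatic: the two one-sided matches are polynomial identities, and in any case functions in $H^1_{\alpha,0}$ are determined only up to the single point $\alpha$.
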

\begin{proof}
We only prove the theorem for $p=2$ since the proof for general $p$ follows closely \rev{in this case} but with more complicated indices. Note that there is only one bubble function
$q_1$ for $p=2$. On $[0,\hat\alpha]$, $\psi_1=0$ and we need to show the existence of $s_{i0}$ such that $q_1=\sum_{i=0}^2s_{i0}\psi_0q_i$. Since $m_1$ can be absorbed into $s_{i0}$, we can assume $m_1=1$ in the following calculation so that $\psi_0=x$.
Expressing $q_i$ as a Taylor polynomial
\[  q_i=q_i(0)+q_i'(0)x+\frac{q''_i(0)}{2}x^2\]
%
and comparing the coefficients of $x,x^2,x^3$ of
\begin{equation}
q_1(0)+q'_1(0)x+q''_1(0)x^2=\sum_{i=0}^2s_{i0}\left(q_i(0)x+q'_i(0)x^2+\frac{q''_i(0)}{2}x^3\right)
\end{equation}
we have
\begin{align}\label{rank}
\sum_{i=0}^2s_{i0}q_i(0)&=q'_1(0),\\
\sum_{i=0}^2s_{i0}q_i'(0)&=q''_1(0),\\
\sum_{i=0}^2s_{i0}q''_i(0)&=0,
\end{align}
whose coefficient matrix is the Wronskian matrix evaluated at $x=0$
\begin{equation}\label{wron}
\begin{pmatrix}
q_0(0)&q_1(0)&q_2(0)\\
q'_0(0)&q'_1(0)&q'_2(0)\\
q''_0(0)&q''_1(0)&q''_2(0)
\end{pmatrix}.
\end{equation}

Since the Wronskian is nonsingular, $s_{00},s_{10},s_{20}$ exist.
Similarly for $[\hat \alpha,1]$, we use the Taylor polynomials at $x=1$. All the equations are the same except they are evaluated at $x=1$.
Thus $s_{01},s_{11},s_{21}$ exist. For the general case, the matrix in \eqref{wron} is a Wronskian matrix of order $p$.
\end{proof}

The approximability of $V_{h,loc}^{enr}$-functions on the reference element is given below
\begin{theorem}[Local approximation] \label{general}
Let $\chi_i,i=1,2$ be the characteristic functions of $[0,\hat\alpha]$ and $[\hat\alpha,1]$, respectively. Define the space
\begin{equation}\label{piecewise}
W=\{w: w=w_1\chi_1+w_2\chi_2, w_i\in \mathbb P_p,i=1,2\}.
\end{equation}
Then
\[  W\subset V_{h,loc}^{enr}.\]
In other words, given two polynomials
 \[     u_1=\sum_{l=0}^pa_l x^l \text{   and   }\quad  u_2=\sum_{l=0}^pb_l(x-1)^l,\]
there exist unique $\zeta_0,\zeta_p;\{\zeta_i\}_{i=p+1}^{2p+1},\{\zeta_j\}_{j=2p+2}^{3p+2}$ such that
  \begin{equation}\label{equate}
\sum_{i=1}^2\chi_iu_i=\sum_{j=0,p}\zeta_jq_j+\sum_{j=0}^{p}\zeta_{j+p+1}q_j\psi_0+\sum_{j=0}^p\zeta_{j+2p+2}
q_j\psi_1.
\end{equation}
\end{theorem}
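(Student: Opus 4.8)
The plan is to exploit the fact that the two enrichment functions decouple on the two subintervals of the reference element: on $[0,\hat\alpha]$ one has $\psi_1\equiv 0$, while on $[\hat\alpha,1]$ one has $\psi_0\equiv 0$. Consequently the only terms on the right-hand side of \eqref{equate} that couple the two sides are $\zeta_0 q_0+\zeta_p q_p$, which is a \emph{single} polynomial in $\mathbb P_p$, continuous across $\hat\alpha$. The structural fact I would establish first is that, since $\{q_j\}_{j=0}^p$ is a basis of $\mathbb P_p$ and $\psi_0=m_1 x$ is a nonzero linear factor, the family $\{q_j\psi_0\}_{j=0}^p$ is a basis for the space $x\,\mathbb P_p$ of polynomials of degree $\le p+1$ vanishing at $0$; symmetrically $\{q_j\psi_1\}_{j=0}^p$ is a basis for $(x-1)\mathbb P_p$, the degree-$\le p+1$ polynomials vanishing at $1$.

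For existence I would construct the coefficients explicitly. Using the nodal identities $q_0(0)=q_p(1)=1$ and $q_0(1)=q_p(0)=0$, I set $\zeta_0:=u_1(0)$ and $\zeta_p:=u_2(1)$ and put $g:=\zeta_0 q_0+\zeta_p q_p$, so that $g(0)=u_1(0)$ and $g(1)=u_2(1)$. Then $u_1-g$ has degree $\le p$ and vanishes at $0$, hence lies in $x\,\mathbb P_p$ and is uniquely $\sum_j\eta_j q_j\psi_0$; likewise $u_2-g$ vanishes at $1$ and is uniquely $\sum_j\mu_j q_j\psi_1$. Reading off $\chi_1 u_1+\chi_2 u_2$ side by side (using $\psi_1=0$ on the left and $\psi_0=0$ on the right) reproduces \eqref{equate} with $\zeta_{j+p+1}=\eta_j$ and $\zeta_{j+2p+2}=\mu_j$.

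For uniqueness I would show that the $2p+4$ functions $q_0,q_p,\{q_j\psi_0\},\{q_j\psi_1\}$ are linearly independent. Setting a vanishing combination to zero and evaluating at $x=0$ and $x=1$ — where both enrichment functions vanish — forces $\zeta_0=\zeta_p=0$ at once. The remainder $\psi_0\sum\eta_j q_j+\psi_1\sum\mu_j q_j\equiv 0$ then splits: restricting to $(0,\hat\alpha)$, where $\psi_0\ne 0$ and $\psi_1=0$, and cancelling $\psi_0$ gives $\sum\eta_j q_j\equiv 0$, whence all $\eta_j=0$ by independence of the $q_j$; the right subinterval gives all $\mu_j=0$ symmetrically.

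The main obstacle — and the reason the proof cannot be carried out one subinterval at a time — is the coupling through $q_0,q_p$ together with a dimension subtlety: restricted to $[0,\hat\alpha]$ alone the functions $q_0,q_p,\{q_j\psi_0\}$ are \emph{dependent}, since $q_p$ vanishes at $0$ and hence already lies in $x\,\mathbb P_p=\mathrm{span}\{q_j\psi_0\}$ (consistent with Lemma \ref{bubble-free1}). Independence and uniqueness are recovered only through the global evaluation at both endpoints, which is precisely why the shared coefficients $\zeta_0,\zeta_p$ must be pinned down before the two local systems decouple. Finally I would note that $\dim W=2p+2$ is strictly smaller than the $2p+4$ available functions, so $W$ is a proper subspace of $V_{h,loc}^{enr}$ and the asserted uniqueness is uniqueness of coordinates in the basis just established.
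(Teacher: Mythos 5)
Your proof is correct, and it reaches the conclusion by a genuinely different route than the paper's. The paper never constructs the coefficients: it matches Taylor coefficients at $x=0$ and $x=1$ to turn \eqref{equate} into the square $(2p+4)\times(2p+4)$ system \eqref{ell1}--\eqref{ell4}, and then shows the homogeneous system has only the trivial solution --- first $\zeta_0=\zeta_p=0$ from the $l=0$ equations (the same endpoint evaluation you use), after which the system decouples into \eqref{final1}--\eqref{final2}, whose coefficient matrices are nonsingular Wronskian matrices; existence then comes for free from squareness. You instead prove existence constructively ($\zeta_0=u_1(0)$, $\zeta_p=u_2(1)$, then expand $u_1-g\in x\,\mathbb P_p$ and $u_2-g\in(x-1)\mathbb P_p$ in the bases $\{q_j\psi_0\}$ and $\{q_j\psi_1\}$) and prove uniqueness as linear independence of the $2p+4$ functions, cancelling the factor $\psi_0$ (resp.\ $\psi_1$) on the open subinterval where it does not vanish. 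The common core of both arguments is that the two sides couple only through $\zeta_0 q_0+\zeta_p q_p$, and that evaluation at the endpoints pins these two coefficients down before anything else; everything after that step differs. The paper's Taylor/Wronskian device has the virtue of running in parallel with the proof of Lemma~\ref{bubble-free1}, so the two proofs share machinery. Your version is more elementary (multiplication by $x$ preserves linear independence; a polynomial vanishing on an interval is identically zero), yields explicit coefficient formulas, and exposes more structure: it identifies $V_E$ restricted to the left (resp.\ right) subinterval as exactly $x\,\mathbb P_p$ (resp.\ $(x-1)\mathbb P_p$), and since your existence argument works verbatim for $u_i\in\mathbb P_{p+1}$, it shows $V_{h,loc}^{enr}$ is precisely the $(2p+4)$-dimensional space of piecewise $\mathbb P_{p+1}$ functions with no continuity constraint, which makes your closing codimension-two remark about $W$ immediate. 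Your observation that $q_0,q_p,\{q_j\psi_0\}$ become dependent when restricted to $[0,\hat\alpha]$ alone is also correct, and is a good explanation of why neither subinterval can be treated in isolation.
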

\begin{proof}
To prove the uniqueness of the solution of the square system in the unknown $\zeta_i$'s, we proceed as follows:
 \begin{align}\label{p+1power}
\sum_{i=0,p}\zeta_iq_i+\sum_{i=0}^p\zeta_{i+(p+1)}q_i\psi_0&=\sum_{l=0}^pa_lx^l,\\
\sum_{i=0,p}\zeta_iq_i+\sum_{i=0}^p\zeta_{i+2(p+1)}q_i\psi_1&=\sum_{l=0}^pb_l(x-1)^l.
 \end{align}
We can take $m_1=m_2=1$ because they can be absorbed into coefficients.
Switching the indices in
\[\sum_{i=0,p}\zeta_i\left(\sum_{l=0}^p\frac{q^{(l)}_i(0)}{l!}x^l\right)+\sum_{i=0}^p\zeta_{i+(p+1)}
 \left(\sum_{l=0}^p\frac{q^{(l)}_i(0)}{l!}x^l\right)x=\sum_{l=0}^pa_lx^l,\]
 we obtain
 \begin{align}\label{set1}
\sum_{l=0}^p\left(\sum_{i=0,p}\zeta_i\frac{q^{(l)}_i(0)}{l!}\right)x^l+
 \sum_{l=1}^{p+1}\left(\sum_{i=0}^p\zeta_{i+(p+1)}\frac{q^{(l-1)}_i(0)}{(l-1)!}\right)x^l=\sum_{l=0}^pa_ix^l.
 \end{align}
 Similarly from
 \[
\sum_{i=0,p}\zeta_i\left(\sum_{l=0}^p\frac{q^{(l)}_i(1)}{l!}(x-1)^l\right)+\sum_{i=0}^p\zeta_{i+2(p+1)}
 \left(\sum_{l=0}^p\frac{q^{(l)}_i(1)}{l!}(x-1)^l\right)(x-1)=\sum_{l=0}^pb_l(x-1)^l,
\]
we have
 \begin{align}\label{set2}
\sum_{l=0}^p\left(\sum_{i=0,p}\zeta_i\frac{q^{(l)}_i(1)}{l!}\right)(x-1)^l+
 \sum_{l=1}^{p+1}\left(\sum_{i=0}^p\zeta_{i+2(p+1)}\frac{q^{(l-1)}_i(1)}{(l-1)!}\right)(x-1)^l=\sum_{l=0}^pb_l(x-1)^l.
 \end{align}

 Comparing the coefficients of $x^{p+1}$ from the two sides of \eqref{set1} and \eqref{set2}, we see that
 \begin{align}\label{highest}
  \sum_{i=0}^p\zeta_{i+(p+1)}q^{(p)}_i(0)&=0,\\
  \sum_{i=0}^p\zeta_{i+2(p+1)} q^{(p)}_i(1)&=0.
 \end{align}
 Comparing the coefficients of $x^l,0\leq l\leq p$, we arrive at
 \begin{align*}
 \sum_{i=0,p}\zeta_i\frac{q^{(l)}_i(0)}{l!}+\sum_{i=0}^p\zeta_{i+(p+1)}\frac{q^{(l-1)}_i(0)}{(l-1)!}&=a_l,\\
  \sum_{i=0,p}\zeta_i\frac{q^{(l)}_i(1)}{l!}+\sum_{i=0}^p\zeta_{i+2(p+1)}\frac{q^{(l-1)}_i(1)}{(l-1)!}&=b_l,
 \end{align*}
 with the understanding that the second terms on the left drop out when $l=0$.
 Collecting terms, we see that the square system is
 \begin{align}\label{ell1}
    \sum_{i=0}^p\zeta_{i+(p+1)}q^{(p)}_i(0)&=0,\\
  \label{ell2}   \sum_{i=0,p}\zeta_i\frac{q^{(l)}_i(0)}{l!}+\sum_{i=0}^p\zeta_{i+(p+1)}\frac{q^{(l-1)}_i(0)}{(l-1)!}&=a_l,\, 0\leq l\leq p,\\
 \label{ell3}     \sum_{i=0}^p\zeta_{i+2(p+1)} q^{(p)}_i(1)&=0,\\
  \label{ell4} \sum_{i=0,p}\zeta_i\frac{q^{(l)}_i(1)}{l!}+\sum_{i=0}^p\zeta_{i+2(p+1)}\frac{q^{(l-1)}_i(1)}{(l-1)!}&=b_l,\,0\leq l\leq p.
 \end{align}
 Since this system is square it suffices to show that all $\zeta_i$'s are zero when $a_l=b_l=0\leq l\leq p$.
 First, observe that $\zeta_0=\zeta_p=0$. In fact,  with $l=0$ using the second and fourth equations we have
 \begin{align*}
  \zeta_0q_0(0)+\zeta_pq_p(0)=0,\\
  \zeta_0q_0(1)+\zeta_pq_p(1)=0.
  \end{align*}
  The conclusion follows using $q_p(1)=1,q_p(0)=0, q_0(0)=1,q_0(1)=0$.
  As a consequence, \eqref{ell2} and \eqref{ell4} decouple and simplify to two linear systems
  \begin{align}\label{final1}
  \sum_{i=0}^p\zeta_{i+(p+1)}q^{(l-1)}_i(0) &=0,\\
 \label{final2} \sum_{i=0}^p\zeta_{i+2(p+1)}q^{(l-1)}_i(1)&=0
 \end{align}
 whose coefficient matrices are nonsingular, being Wronskian matrices. Thus, all $\zeta_i=0$.
\end{proof}

Transforming the above results on [0,1]  to $[x_k,x_{k+1}]$, we have the following result.
\begin{lemma}[Local interpolant] \label{local}
 Let $g\in C(\bar \omega), \omega=(c,d)$ and  $I^p_{h,\omega}g\in \mathbb P_p$ be the interpolating polynomial of $g$ at $p+1$ nodes $\xi_i=c+i[d-c]/p,i=0,\ldots, p$. Then for $v\in C[x_{k},\alpha]\cap C[\alpha,x_{k+1}]$, there exists a $\rho \in V_h^{enr}\bigg|_{[x_k,x_{k+1}]}$ such that
\begin{equation}
\left(I^p_{h,[x_k,\alpha]}v\right)\chi_{[x_k,\alpha]}+\left(I^p_{h,[\alpha,x_{k+1}]}v\right)\chi_{[\alpha,x_{k+1}]}=\rho.
\end{equation}
\end{lemma}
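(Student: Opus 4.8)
The plan is to deduce the lemma from Theorem \ref{general} by an affine change of variables, so that the statement on the interface element $[x_k,x_{k+1}]$ reduces to the already-established reference-element result on $[0,1]$. The starting observation is that the left-hand object is piecewise polynomial: since $I^p_{h,[x_k,\alpha]}v$ and $I^p_{h,[\alpha,x_{k+1}]}v$ each lie in $\mathbb P_p$ (and therefore extend to genuine polynomials on all of $[x_k,x_{k+1}]$), the function
\[
F := \bigl(I^p_{h,[x_k,\alpha]}v\bigr)\chi_{[x_k,\alpha]} + \bigl(I^p_{h,[\alpha,x_{k+1}]}v\bigr)\chi_{[\alpha,x_{k+1}]}
\]
is exactly of the form $w_1\chi_1+w_2\chi_2$ with $w_i\in\mathbb P_p$, i.e. the physical-element counterpart of a member of the space $W$ in \eqref{piecewise}. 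Notably, the particular node locations used in the two one-sided interpolations play no role; all that is used is that each piece has degree at most $p$, which is precisely the hypothesis of Theorem \ref{general}.

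Next I would introduce the affine map $T:[0,1]\to[x_k,x_{k+1}]$, $T(\hat x)=x_k+h_k\hat x$ with $h_k=x_{k+1}-x_k$, so that the reference interface point is $\hat\alpha=(\alpha-x_k)/h_k$. Three invariances under $T$ must be recorded: (i) $\mathbb P_p$ is affine-invariant, so $F\circ T$ retains the piecewise-polynomial form required by Theorem \ref{general}; (ii) the characteristic functions pull back to the $\chi_1,\chi_2$ of $[0,\hat\alpha]$ and $[\hat\alpha,1]$; and (iii) the enrichment functions transform correctly, since a direct computation using \eqref{slopes} gives $\psi_0\circ T=\hat x/\hat\alpha$ on $[0,\hat\alpha]$ and $\psi_1\circ T=(\hat x-1)/(\hat\alpha-1)$ on $[\hat\alpha,1]$, both one-sided linear functions with nonzero slopes. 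By Remark 2.1, only the span $F_{enr}=\text{span}\{\psi_0,\psi_1\}$ matters, so these pulled-back functions span the same reference enrichment space, and consequently $V_h^{enr}\big|_{[x_k,x_{k+1}]}=V_{h,loc}^{enr}\circ T^{-1}$.

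With these in hand I would apply Theorem \ref{general} to $F\circ T\in W$ to obtain the unique coefficients $\zeta_i$ expressing $F\circ T$ in the enriched basis $\{q_0,q_p\}\cup\{q_i\psi_0,q_i\psi_1\}$, producing a function $\hat\rho\in V_{h,loc}^{enr}$ with $\hat\rho=F\circ T$. Setting $\rho:=\hat\rho\circ T^{-1}$ then yields $\rho\in V_h^{enr}\big|_{[x_k,x_{k+1}]}$ with $\rho=F$, which is exactly the asserted identity.

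I do not anticipate a genuine obstacle here, as the lemma is essentially a transplantation of Theorem \ref{general} to the physical element. The only point requiring care is invariance (iii), the behavior of the enrichment functions under $T$. This is mild: the slope normalization in \eqref{slopes} is immaterial, since both Theorem \ref{general} (which absorbs $m_1=m_2=1$ into the coefficients) and Remark 2.1 confirm that the enriched space depends only on the span of the one-sided functions, so the pulled-back slopes $1/\hat\alpha$ and $1/(\hat\alpha-1)$ cause no difficulty.
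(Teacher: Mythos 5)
Your proposal is correct and follows essentially the same route as the paper, which proves Lemma \ref{local} by the one-line observation that the reference-element result (Theorem \ref{general}) transfers to $[x_k,x_{k+1}]$ under the affine map. Your write-up simply makes explicit the details the paper leaves implicit: the two one-sided interpolants form a function in the physical counterpart of $W$, and the pulled-back enrichment functions $\hat x/\hat\alpha$ and $(\hat x-1)/(\hat\alpha-1)$ span the same reference enrichment space, so the restriction of $V_h^{enr}$ corresponds to $V_{h,loc}^{enr}$.
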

Combining the classical results for non-interface elements and Lemma \ref{local} for the interface element, we have
\begin{lemma}[Global interpolant] \label{interpo}
Define a global interpolant $\mathcal I_{h,E}: C[a,\alpha]\cap C([\alpha,b])\to \mathbb R$ by
$\mathcal I_{h,E}v = I^p_{h,\omega}v, \omega=[x_{k},\alpha],[\alpha,x_{k+1}], [x_i,x_{i+1}], i=0,\ldots,k-1, k+1,\ldots,N-1.$
Then
\[  |v-\mathcal I_{h,E}v|_{1,I^-\cup I^+}\leq Ch^p|v|_{p+1,I^-\cup I^+}. \]
\end{lemma}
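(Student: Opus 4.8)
The plan is to reduce this global estimate to an assembly of standard local $\mathbb{P}_p$ interpolation bounds, exploiting the fact that $\mathcal{I}_{h,E}$ is built element by element in a way that respects the interface. First I would note that the partition $\{x_i\}$ together with $\alpha$ decomposes $I^-=(a,\alpha)$ and $I^+=(\alpha,b)$ into the non-interface elements $[x_i,x_{i+1}]$ ($i\neq k$) and the two interface subintervals $[x_k,\alpha]$ and $[\alpha,x_{k+1}]$. On each non-interface element $\mathcal{I}_{h,E}v$ equals the ordinary $\mathbb{P}_p$-Lagrange interpolant, and by Lemma \ref{local} the two interface pieces together assemble into a function in $V_h^{enr}|_{[x_k,x_{k+1}]}$, so the interpolant is well defined and lies in the enriched space.

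Since the broken seminorm is additive over this decomposition,
\begin{align*}
|v-\mathcal{I}_{h,E}v|^2_{1,I^-\cup I^+}
&= \sum_{i\neq k}\left|v-I^p_{h,[x_i,x_{i+1}]}v\right|^2_{1,[x_i,x_{i+1}]} \\
&\quad + \left|v-I^p_{h,[x_k,\alpha]}v\right|^2_{1,[x_k,\alpha]}
+ \left|v-I^p_{h,[\alpha,x_{k+1}]}v\right|^2_{1,[\alpha,x_{k+1}]}.
\end{align*}
Thus it suffices to control each local term.

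Next I would establish the local estimate on an arbitrary subinterval $\omega=[c,d]$ of length $H=d-c$. Via the affine map onto the reference element $[0,1]$, the operator $I^p_{h,\omega}$ pulls back to the \emph{fixed} reference interpolant at the equally spaced nodes $\xi_i=i/p$; applying the Bramble--Hilbert lemma on $[0,1]$ and scaling back yields
\[
|v-I^p_{h,\omega}v|_{1,\omega}\leq C\,H^p\,|v|_{p+1,\omega},
\]
with $C=C(p)$ independent of both $H$ and the location of $\omega$. Since every interval length is at most $h$ — in particular $\alpha-x_k\leq h$ and $x_{k+1}-\alpha\leq h$ — each local error is bounded by $Ch^p|v|_{p+1,\omega}$. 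Substituting into the decomposition, factoring out $h^{2p}$, and recognizing that the sum of the squared local $H^{p+1}$-seminorms reassembles into the broken seminorm $|v|^2_{p+1,I^-\cup I^+}$, I take square roots to obtain the claim.

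The only genuinely delicate point is the uniformity of $C$ on the two interface subintervals, whose lengths $\alpha-x_k$ and $x_{k+1}-\alpha$ may be arbitrarily small relative to $h$. This is exactly why the estimate must be obtained through the scaling argument to the fixed reference element with fixed interpolation nodes, rather than by a direct computation: once on $[0,1]$ the operator and the constant no longer see the (possibly degenerate) local geometry, and the favorable factor $H^p\leq h^p$ only improves the bound. Everything else is a routine local-to-global summation.
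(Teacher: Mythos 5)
Your proof is correct and takes essentially the same route as the paper: the paper justifies this lemma in one line by ``combining the classical results for non-interface elements and Lemma \ref{local} for the interface element,'' which is exactly the assembly you carry out. The detail you add --- that the affine scaling to the fixed reference element with Bramble--Hilbert gives an interpolation constant independent of the subinterval length, so the possibly tiny pieces $[x_k,\alpha]$ and $[\alpha,x_{k+1}]$ cause no degradation and only improve the factor $H^p\leq h^p$ --- is precisely the point the paper leaves implicit.
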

\begin{theorem}[Error Estimate]  \label{thm2.6}
Let ${u}$ be the exact solution  and ${u}_{h}$ be the approximate solution of (\ref{ExactWeak})  and (\ref{FEMweak}), respectively. Then
there exists a constant $C>0$ such that
\begin{equation}\label{main}
\|{u}-{u}_{h}\|_{0,I^-\cup I^+} + h\|{u}-{u}_{h}\|_{1,I^-\cup I^+} \leq Ch^{p+1}\|{u}\|_{p+1,I^{-}\cup I^{+}}
\end{equation}
provided that the norm of the exact solution on the right side is finite.
The constant $C$ is independent of $h$ and $\alpha$ but depends on the ratio  $\rho:=\frac{\beta^{*}}{\beta_{*}}$ with $\beta^{*} = \sup_{x\in [a,b]}\beta(x)$ and
$\beta_{*} = \inf_{x\in [a,b]}\beta(x)$.
\end{theorem}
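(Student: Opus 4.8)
The plan is to deduce \eqref{main} from a C\'ea-type quasi-optimality followed by an Aubin--Nitsche duality argument, using the interpolation bound of Lemma \ref{interpo} as the only approximation-theoretic input. The starting point is Galerkin orthogonality: subtracting \eqref{FEMweak} from \eqref{ExactWeak} and using $\overline{S}^p_h\subset H^1_{\alpha,0}(I)$ gives $a(u-u_h,v_h)=0$ for all $v_h\in\overline{S}^p_h$. Next I would record that $a(\cdot,\cdot)$ is coercive on $H^1_{\alpha,0}(I)$: since $\beta\ge\beta_*>0$, $w\ge0$ and $\lambda>0$, the Dirichlet and jump terms give $a(v,v)\ge\beta_*|v|_{1,I^-\cup I^+}^2$, which together with the Poincar\'e inequality controls the full broken norm; and that it is bounded, $|a(u,v)|\le C\|u\|_{1,I^-\cup I^+}\|v\|_{1,I^-\cup I^+}$, where the jump term $\lambda^{-1}[u]_\alpha[v]_\alpha$ is absorbed by a trace estimate bounding $|[v]_\alpha|$ by the broken $H^1$ norm. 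Both constants depend only on $\beta_*$, $\beta^*$, $\|w\|_\infty$ and $\lambda$, i.e.\ on $\rho$, but not on $\alpha$.

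For the broken $H^1$ estimate I would then run the standard C\'ea argument. Coercivity, Galerkin orthogonality and boundedness give, for every $v_h\in\overline{S}^p_h$,
\[ \beta_*\|u-u_h\|_{1,I^-\cup I^+}^2\le a(u-u_h,u-u_h)=a(u-u_h,u-v_h)\le C\|u-u_h\|_{1,I^-\cup I^+}\|u-v_h\|_{1,I^-\cup I^+}, \]
hence $\|u-u_h\|_{1,I^-\cup I^+}\le C\inf_{v_h}\|u-v_h\|_{1,I^-\cup I^+}$. The crucial point is that the infimum can be realized by the global interpolant: by Lemma \ref{local} (itself a consequence of Theorem \ref{general}) the piecewise interpolant on the interface element coincides with a function of $V_h^{enr}$, so $\mathcal I_{h,E}u\in\overline{S}^p_h$ is an admissible comparison function. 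Taking $v_h=\mathcal I_{h,E}u$ and invoking Lemma \ref{interpo} (the $L^2$ part of the interpolation error being of higher order and thus dominated by the $H^1$-seminorm part) yields $\|u-u_h\|_{1,I^-\cup I^+}\le Ch^p\|u\|_{p+1,I^-\cup I^+}$, which is the $H^1$ half of \eqref{main}.

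To obtain the optimal $L^2$ rate I would use duality. Let $e=u-u_h$ and let $\phi\in H^1_{\alpha,0}(I)$ solve the adjoint problem $a(v,\phi)=(e,v)$ for all $v\in H^1_{\alpha,0}(I)$; since $a$ is symmetric this is the same interface problem with the same jump conditions \eqref{jmp1}, so I would assume, or separately establish, the elliptic regularity $\phi\in H^2_\alpha(I)$ with $\|\phi\|_{2,I^-\cup I^+}\le C\|e\|_{0,I^-\cup I^+}$. Testing with $v=e$, using Galerkin orthogonality against $\phi_h=\mathcal I_{h,E}\phi\in\overline{S}^p_h$, boundedness, and the first-order estimate $\|\phi-\mathcal I_{h,E}\phi\|_{1,I^-\cup I^+}\le Ch\,\|\phi\|_{2,I^-\cup I^+}$ from Lemma \ref{interpo}, gives
\[ \|e\|_{0,I^-\cup I^+}^2=(e,e)=a(e,\phi-\phi_h)\le C\|e\|_{1,I^-\cup I^+}\,\|\phi-\mathcal I_{h,E}\phi\|_{1,I^-\cup I^+}\le C\,h\,\|e\|_{1,I^-\cup I^+}\,\|e\|_{0,I^-\cup I^+}. \]
Cancelling one power of $\|e\|_{0,I^-\cup I^+}$ and inserting the $H^1$ bound produces $\|e\|_{0,I^-\cup I^+}\le Ch^{p+1}\|u\|_{p+1,I^-\cup I^+}$, completing \eqref{main}.

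The routine part is the C\'ea estimate; the two genuinely delicate points are that every constant is uniform in the interface location $\alpha$, and the adjoint regularity. The first is where the design of the enrichment functions pays off: because the interpolation in Lemma \ref{interpo} is performed separately on $[x_k,\alpha]$ and $[\alpha,x_{k+1}]$, each of length at most $h$, and the conversion of the piecewise interpolant into $V_h^{enr}$ via Theorem \ref{general} is exact rather than stability-dependent, the approximation constant does not degenerate as $\alpha\to x_k$ or $\alpha\to x_{k+1}$. I expect the main obstacle to be making the duality step rigorous, namely verifying $\|\phi\|_{2,I^-\cup I^+}\le C\|e\|_{0,I^-\cup I^+}$ across the interface under the implicit jump condition \eqref{jmp1}, since the nonstandard Robin-type coupling at $\alpha$ prevents a direct appeal to classical regularity and must instead be tracked through the jump terms in the bilinear form.
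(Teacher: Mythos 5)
Your proof follows essentially the same route as the paper's: Galerkin orthogonality, coercivity and boundedness of $a(\cdot,\cdot)$, C\'ea's lemma with the comparison function $\mathcal I_{h,E}u$ supplied by Lemma \ref{interpo}, and an Aubin--Nitsche duality argument for the $L^2$ rate. The only difference is one of detail: the paper compresses the last step into ``the usual duality argument,'' whereas you spell it out and correctly identify the adjoint regularity $\|\phi\|_{2,I^-\cup I^+}\leq C\|e\|_{0,I^-\cup I^+}$ under the Robin-type jump condition \eqref{jmp1} as the hypothesis that must be supplied (which, in 1D with piecewise smooth $\beta$, holds by the same implicit regularity assumption the paper makes on the primal problem).
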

\begin{proof}
Subtracting (\ref{ExactWeak}) from (\ref{FEMweak}), we have
\[
a({u}-{u}_{h},q_{h}) = 0\quad\forall q_{h}\in\overline{S}_{h}.
\]
Using the boundedness and coercivity properties of the bilinear form $a(\cdot,\cdot)$, we get
\begin{eqnarray*}
\beta_{*}|{u}-{u}_{h}|^2_{1,I} &\leq & a({u}-{u}_{h},{u}-{u}_{h})  = a({u}-{u}_{h},{u}-q_{h}) \\
&\leq& \beta^{*}|{u}-{u}_{h}|_{1,I}  |{u}-q_{h}|_{1,I},
\end{eqnarray*}
where $\beta^{*} = \sup_{x\in [a,b]}\beta(x)$  and $\beta_{*} = \inf_{x\in [a,b]}\beta(x)$.  Thus, by Cea's lemma and Lemma \ref{interpo}
\begin{eqnarray*}
|{u}-{u}_{h}|_{1,I}   &\leq& \frac{\beta^{*}}{\beta_{*}}\inf |{u}-q_{h}|_{1,I}  \\
&\leq& \frac{\beta^{*}}{\beta_{*}} |{u}- \mathcal{I}_{h,E} u|_{1,I} \\
&\leq& Ch^p\|{u}\|_{p+1,I^{-}\cup I^{+}}.
\end{eqnarray*}
Then the usual duality argument leads to
\[
\|{u}-{u}_{h}\|_{0,I}  \leq  Ch^{p+1}\|{u}\|_{p+1,I^{-}\cup I^{+}}.
\]
\end{proof}

We note that the jump ratios  $\rho:=\frac{\beta^{*}}{\beta_{*}}$ are of moderate size for the wall model in Section \ref{NumExample}.

\begin{theorem}[\bf{$2p$-th order accuracy at nodes}] \label{2order}
Suppose that $\beta\in C^{p+1}(a,\alpha)\cap C^{p+1}(\alpha,b),p\geq 1$ and $w(x)=0$. Let ${u}$ be the exact solution  and ${u}_{h}$ be the approximate solution of (\ref{ExactWeak})  and (\ref{FEMweak}), respectively.
Then there exists a constant $C>0$ such that
\begin{equation}\label{2nd}
  |{u}(\xi)-{u}_h(\xi)|\leq Ch^{2{p}}||{u}||_{p+1,I^-\cup I^+},\quad \xi=x_i,1\leq i\leq n-1,
  \end{equation}
where $C$ depends on certain norms of Green's function at $\xi$.\\
{\bf Superconvergence}. Furthermore, if $\beta$ is piecewise constant with respect to $[a,\alpha]$ and $[\alpha,b]$, then
\begin{equation}\label{exact}
  u(\xi)=u_h(\xi) \quad \forall \xi=x_i,1\leq i\leq n-1.
  \end{equation}

\end{theorem}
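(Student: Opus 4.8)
The plan is to run a Green's-function (duality) argument localized at the node $\xi$. Since $\xi=x_i$ is an interior node and $\alpha\in(x_k,x_{k+1})$ lies strictly inside an element, we have $\xi\neq\alpha$, so point evaluation $v\mapsto v(\xi)$ is a bounded linear functional on $H^1_{\alpha,0}(I)$. Because $w=0$ the form $a(\cdot,\cdot)$ is symmetric and coercive, so by Riesz representation there is a unique $g_\xi\in H^1_{\alpha,0}(I)$, the nodal Green's function, with $a(v,g_\xi)=v(\xi)$ for all $v\in H^1_{\alpha,0}(I)$. Taking $v=e:=u-u_h$ gives $e(\xi)=a(e,g_\xi)$, and subtracting \eqref{FEMweak} from \eqref{ExactWeak} yields Galerkin orthogonality $a(e,v_h)=0$ for all $v_h\in\overline{S}^p_h$. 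Hence
\[
e(\xi)=a(e,\,g_\xi-v_h),\qquad \forall\, v_h\in\overline{S}^p_h,
\]
and this single identity drives both parts of the theorem.

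For the $O(h^{2p})$ estimate I would first record the strong form of $g_\xi$: testing against $v$ supported away from $\xi$ and $\alpha$ shows $(\beta g_\xi')'=0$ there, so since $\beta\in C^{p+1}$ on each side of $\alpha$, $g_\xi$ is correspondingly smooth on every \emph{open} element; its only singularities are a derivative kink at the node $\xi$ (from the point source) and a jump across $\alpha$ (the adjoint transmission condition, which inherits the form of \eqref{jmp1} by symmetry of $a$). Both singular locations sit at element boundaries ($\xi=x_i$) or at the interface $\alpha$, never in an element interior, so the \emph{broken} seminorm $|g_\xi|_{p+1,I^-\cup I^+}$ (summed element-by-element) is finite. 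Choosing $v_h=\mathcal{I}_{h,E}g_\xi$ and applying Lemma \ref{interpo} element-wise gives $|g_\xi-\mathcal{I}_{h,E}g_\xi|_{1,I^-\cup I^+}\le Ch^p|g_\xi|_{p+1,I^-\cup I^+}$. Moreover, since the local interpolant of Lemma \ref{local} reproduces the one-sided values $g_\xi(\alpha^{\pm})$, one has $[g_\xi-\mathcal{I}_{h,E}g_\xi]_\alpha=0$, so the jump contribution to $a(e,g_\xi-\mathcal{I}_{h,E}g_\xi)$ drops out and only the $\beta$-weighted term survives, bounded by $\beta^{*}|e|_{1,I^-\cup I^+}|g_\xi-\mathcal{I}_{h,E}g_\xi|_{1,I^-\cup I^+}$. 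Combining this with the $H^1$ error bound of Theorem \ref{thm2.6}, $|e|_{1,I^-\cup I^+}\le Ch^p\|u\|_{p+1,I^-\cup I^+}$, multiplies the two $O(h^p)$ factors and yields \eqref{2nd}, the constant absorbing the Green's-function seminorm at $\xi$.

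For the exactness claim, suppose $\beta$ is piecewise constant on $[a,\alpha]$ and $[\alpha,b]$. Then $(\beta g_\xi')'=0$ forces $\beta g_\xi'$ to be constant on each smooth piece, so $g_\xi$ is \emph{piecewise linear} with breakpoints only at $\xi$ and at $\alpha$: continuous and piecewise-$\mathbb{P}_1$ on each of $[a,\alpha]$, $[\alpha,b]$, with a single jump across $\alpha$. I claim $g_\xi\in\overline{S}^p_h$. Away from the interface element $g_\xi$ is continuous and affine on each mesh element (its only kink, at $\xi=x_i$, falls on a node), hence lies in $S^1_h\subset S^p_h$. On $[x_k,x_{k+1}]$ it is affine on each side of $\alpha$, so it belongs to the space $W$ of functions that are $\mathbb{P}_p\,(\supset\mathbb{P}_1)$ on either side of $\alpha$; by Theorem \ref{general} (equivalently Lemma \ref{local}) such a function lies in the local enriched space, and its continuous matching at $x_k,x_{k+1}$ assembles it into a global member of $\overline{S}^p_h$ (the enrichment correction is supported only on the interface element). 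Taking $v_h=g_\xi$ in the identity above then gives $e(\xi)=a(e,g_\xi-g_\xi)=0$, which is \eqref{exact}.

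The main obstacle is the regularity bookkeeping for $g_\xi$ in the first part: the point source places a derivative kink exactly at $\xi$, so $g_\xi\notin H^2$ globally and Lemma \ref{interpo} cannot be read over $I^-,I^+$ as monolithic intervals. The argument hinges on the fact that $\xi$ is a \emph{node} while $\alpha$ is interior to a single element, so all non-smoothness is confined to element boundaries and the interpolation estimate survives element-wise with broken seminorms. The one genuinely structural point to verify is that the adjoint transmission condition for $g_\xi$ across $\alpha$ reproduces the jump form of \eqref{jmp1} (needed both for piecewise linearity and for the membership $g_\xi\in\overline{S}^p_h$), which is precisely where the symmetry of $a(\cdot,\cdot)$ is used.
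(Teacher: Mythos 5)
Your proposal is correct and follows essentially the same route as the paper's proof: a Green's-function duality argument combined with element-wise (broken-norm) interpolation of $g_\xi$ to multiply two $O(h^p)$ factors for \eqref{2nd}, and, for the piecewise-constant case, piecewise linearity of $g_\xi$ so that $g_\xi\in\overline{S}^p_h$ by Theorem \ref{general}, giving $e(\xi)=a(e,g_\xi)=0$. The only notable (minor) difference is that the paper constructs the Green's function explicitly via $K(x)=\int_0^x \beta(t)^{-1}\,dt$ to read off its piecewise $H^{p+1}$-regularity and piecewise linearity, whereas you obtain the same facts abstractly from the strong form $(\beta g_\xi')'=0$ and the adjoint transmission conditions; your explicit observation that $[g_\xi-\mathcal{I}_{h,E}g_\xi]_\alpha=0$, so the $\lambda^{-1}$ jump term drops out of $a(e,g_\xi-\mathcal{I}_{h,E}g_\xi)$, is a step the paper uses but leaves implicit.
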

\begin{proof}
Let $g=G(\cdot,\xi),\xi\ne \alpha$ be the Green's function satisfying
\[ a(G(\cdot,\xi),v)=<\delta(x-\xi),v>,\quad v\in H^1_{0,\alpha}(a,b)\]
whose existence is guaranteed by the Lax-Milgram theorem, since in 1D point evaluation is a bounded operator.
We can find Green's function via the classical formulation (for simplicity let $[a,b]=[0,1]$ and $\xi<\alpha$):
\begin{align*}   -(\beta g')'&=\delta(x-\xi), 0<x<1,  &g(0)=g(1)=0,\\
 [g]_\alpha&=\gamma[g']_\alpha,  &[\beta g']_\alpha=0,\\
 [g]_\xi&=0,  &[\beta g']_\xi=1.
 \end{align*}
 Define
\begin{equation}\label{Kfunction}
   K(x):=\int_0^x\frac{1}{\beta(t)}dt,\,0\leq x\leq\alpha;\quad K^c(x):=\int_x^1\frac{1}{\beta(t)}dt,\, \alpha\leq x\leq 1.
   \end{equation}
 Then, similar to the techniques in \cite{Chou} we have
 \begin{equation}
 G(x,\xi)=
 \begin{cases}\label{Green}
 c_1K(x),\qquad &0\leq x\leq \xi\\
 c_3(K(x)-K(\xi))+c_1K(\xi),\qquad &\xi\leq x< \alpha \\
 -c_2K^c(x),\qquad &\alpha< x\leq 1,
 \end{cases}
 \end{equation}
 where with $\gamma=\frac{\lambda\beta^+\beta^-}{(\beta^--\beta^+)}$
 \[  c_3=\dfrac{K(\xi)}{-K^c(\alpha)-K(\alpha)-\lambda},\,c_2=c_3,\,c_1=1+c_3.\]

Thus, for $\xi< \alpha$, $g=G(\cdot,\xi)\in H^{p+1}(\Omega)$, for $\Omega=(a,\xi),(\xi,x_k),(x_k,\alpha),(\alpha,x_{k+1})$, and $(x_{k+1},b)$. Similar regularity holds if $\xi>\alpha$. Using the local estimates in Lemma \ref{interpo}, we conclude that
that there exists $ I_h g\in \bar S_h^p$ such that

\begin{equation}\label{gr} |g- I_h g|_{1,\Omega}\le C h^p ||g^{(p+1)}||_{0,\Omega}
\end{equation}
for all the $\Omega$'s listed above.
Now with $\xi=x_i$
\[   e(x_i)=a(g, e)=a(g- I_h g,e)\] implies that
\begin{align*}
 |e(x_i)|&\le Ch^p||g||_{p+1,*}\,h^p||u||_{p+1,I^-\cup I^+}\\
  &\leq Ch^{2p}||g||_{p+1,*}||u||_{p+1,I^-\cup I^+}
  \end{align*}
   where $||g||^2_{p+1,*}:=\sum||g||^2_{p+1,\Omega},$ the summation being over all the $\Omega$'s listed above.

   We next prove the assertion \eqref{exact}. Since $\beta$ is piecewise constant, Green's function \eqref{Green} is  piecewise linear in $x$ with fixed $\xi_i=x_i,1\leq i\leq n-1$. Hence, $g=G(\cdot,x_i)$ is in $\bar S_h^p$ by Theorem \ref{general} and consequently
   \[e(x_i)=a(g, e)=0.\]
   \end{proof}
Let us point out in passing that the assertion \eqref{2nd} still holds if $0\leq w\in C^{p}[a,b]$,  which can be proven by the techniques of the reduction theorem (p. 4, \cite{wahlbin2006superconvergence}) to the divergence form.

We will use the preconditioned conjugate gradient method to solve the resulting finite element equation.
  The matrix $D_A$, the diagonal part of $A$, is used as a preconditioner, and we solve iteratively a system of the form $D_A^{-1/2}AD_A^{-1/2}y=b$. The scaled condition number $\kappa_2(D_A^{-1}A)$ (SCN)\cite{Babuska2} for the higher order method will be computed in Section \ref{NumExample}.

\section{Extension to Two Dimensions}

Let $\Omega:=[a,b]\times [c,d]$ be a rectangular domain
with a vertical or horizontal interface $\Gamma:=\{(x,y):x=\alpha \}\cap \bar \Omega$ or $\Gamma:=\{(x,y):y= \alpha \}\cap \bar \Omega$
that separates the domain into two subdomains $\Omega^-$ and $\Omega^+$.
Then, $\Gamma = \bar \Omega^- \cap \bar \Omega^+$ with $\Omega^- \cap \Omega^+ = \emptyset$ and $\bar \Omega^- \cup \bar \Omega^+ = \bar \Omega$.
We now consider the following interface elliptic problem:
Find $u: \bar \Omega\to \mathbb R$ such that
\begin{equation} \label{eq:pde2d}
\begin{cases}
-\nabla\cdot(\beta({\bf x})\nabla {u})+w({\bf x}){u}({\bf x}) = f({\bf x}),  & {\bf x}\in \Omega\backslash \Gamma, \\
\qquad\qquad\qquad\qquad\qquad u({\bf x}) =0, & {{\bf x} \in \partial \Omega},
\end{cases}
\end{equation}
where $w({\bf x})\geq 0$, and  $0<\beta\in C(\bar \Omega^-)\cup C(\bar \Omega^+)$ is discontinuous across the interface $\Gamma$.
{As in 1D, we impose the 2D jump} conditions on ${u}$ and its flux $q:=-\beta \nabla u$ across $\Gamma$:
 \begin{equation}
\begin{cases}\label{jmp12D}
[{u}]_\alpha& = \lambda (\beta \nabla {u}\cdot {\bf n})(\alpha^-), \\
[\beta \nabla u\cdot {\bf n}]_\alpha &= 0,
\end{cases}
\end{equation}
where the jump quantity $[\cdot]$ is understood similarly.
\rev{We consider this interface problem to demonstrate our method for simplicity.
Essentially, this work generalizes the SGFEM developed in \cite{Babuska2,kergrene2016stable} for the straight interface problem \eqref{eq:pde2d}-\eqref{jmp12D} with $\lambda =0$ to the case with $\lambda \ne 0$.}
%
\subsection{Enrichment Functions and Weak Formulation in 2D}
Define
\[  H_{\alpha,0}^1(\Omega)=\{v\in L^2(\Omega): v\in H^1(\Omega^-)\cap H^1(\Omega^+), v=0 \text{ on } \partial \Omega\}.\]
We use conventional Sobolev norm notation. For example,  $|u|_{1,J}$  denotes the usual $H^1$-seminorm for $u\in H^1(J)$, and $||u||^2_{i,\Omega^-\cup \Omega^+}=||u||^2_{i,\Omega^-}+||u||^2_{i, \Omega^+}, i=1,2$ for $u\in  H_\alpha^2(\Omega)$, where
\[
 H_\alpha^2(\Omega)= H^2(\Omega^-)\cap H^2(\Omega^+).\]
 The space $H_{\alpha,0}^1(\Omega)$ is endowed with the $||\cdot||_{1,\Omega^-\cup \Omega^+}$ norm, and $H_\alpha^2(\Omega)$ with the $||\cdot||_{2,\Omega^-\cup \Omega^+}$ norm. The higher-order spaces
$H_\alpha^m(\Omega)$ and $H_{\alpha,0}^m(\Omega), m\geq 3$ are similarly defined.
With this in mind, the weak formulation of the problem \eqref{eq:pde2d} under \eqref{jmp12D}
is: Given $f \in L^{2}(\Omega)$, find ${u}\in H_{\alpha,0}^{1}(\Omega)$ such that
\begin{equation}\label{ExactWeak2D}
a({u},v) = (f,v)\quad \forall v\in H_{\alpha,0}^{1}(\Omega),
\end{equation}
where
\begin{align*}
a(u,v) =& \int_\Omega\beta({\bf x})\nabla u({\bf x})\cdot \nabla v({\bf x})d{\bf x}+\int_\Omega w({\bf x}){u}({\bf x})v({\bf x})d{\bf x}+\frac 1 \lambda \int_{{\Gamma}}[{u}]_\alpha[v]_\alpha d\sigma,\\
(f,v) =& \int_\Omega f({\bf x})v({\bf x})d{\bf x}.
\end{align*}

 As in 1D, the above weak formulation can be easily derived by integration-by-parts with \eqref{jmp12D}. With $\lambda>0$, the bilinear form $a(\cdot,\cdot)$ is coercive due to Poincar$\acute{e}$ inequality (when $w=0$ ) and is bounded. By the Lax-Milgram theorem, a unique solution ${u}$ exists. Throughout the paper, we assume that the functions $\beta$, $f$, and $w$ are such that
 the solution ${u}\in H^p_\alpha(\Omega), p\geq 2$.

We assume from now on that the interface is $x=\alpha$ and define a regular rectangular mesh on $\Omega$ as follows. Let $a=x_{0}<x_{1}<\ldots <x_{k}<x_{k+1}<\ldots<x_{N}=b$ be a partition of $I=(a,b)$
{with $\alpha \in (x_{k},x_{k+1})$ for some $k$.}
Similarly, let $c=y_{0}<y_{1}<\ldots<y_{M}=b$ be a partition of $J=(c,d)$.
  Let
\[I_i=[x_i,x_{i+1}], \,  J_j=[y_j,y_{j+1}], \, R_{i,j}=I_i\times J_j,\quad  0\leq i\leq N_x-1,0\leq j\leq N_y-1\]
then $R_{i,j}$ is an element and $R_h=\{R_{i,j}\}$ is the rectangular mesh for $\Omega$. For each $R=R_{i,j}$, we let $h(R)=\max\{h_1,h_2\}, \rho(R)=\min\{h_1,h_2\}$ where $h_1=|x_i-x_{i+1}|, h_2=|y_j-y_{j+1}|$.
We assume $R_h$ is regular, i.e.,  there exists a constant $C>0$ such that
\[ \max_{R\in R_h}\frac{h(R)}{\rho(R)}\leq C.\]
Furthermore, the intersection of the interface $x=\alpha$ with the interface element boundary $\partial R_i$ cannot be a vertex or have nonzero length.

Recall from  \eqref{P1} that $S_{h}$ is the standard $\mathbb P_p,p\geq 1$ conforming finite element space on $I$.
Similarly, we define the standard $\mathbb P_p,p\geq 1$ conforming finite element space on $J$
\begin{align}\label{P1_2D}
{\mathcal S}^{p}_{h} &= \{
v_{h}\in C(\bar  J) : w_{h}|_{J_i} \in \mathbb P_{p}, i=0,\ldots,N_y-1, w_h(c)=w_h(d)=0\}\\
\notag &=\text{span}\{\chi_j,j\in {\mathcal M}^p_h\},
\end{align}
where $\chi_i$'s are the Lagrange nodal basis functions of order $p$ with the nodal index set $\mathcal{M}^p_h:=\{1,2,\cdots,pN_y-1\}$.

It is then nature to define the 2D enriched finite element space ${\mathcal E}^p_h$ as the tensor product of the 1D enriched space $\overline{S}^{p}_{h}$ of \eqref{enriched}
and ${\mathcal S}^{p}_{h}$, the 1D conforming space of \eqref{P1_2D} in the $y-$direction:
\begin{equation}\label{enriched2D}
{\mathcal E}^p_h=\overline{S}^{p}_{h}\otimes{\mathcal S}^{p}_{h}.
\end{equation}

The GFEM or the enriched FEM for problem (\ref{eq:pde2d}) under (\ref{jmp12D}) is to find $u_{h}\in {\mathcal E}^p_{h}\subset H^1_{\alpha,0}(\Omega)$  such that
\begin{equation}\label{FEMweak2D}
a(u_{h},v_{h}) = (f,v_{h})\quad \forall v_{h}\in {\mathcal E}^p_{h}.
\end{equation}

The bilinear form $a(\cdot, \cdot)$ has three parts.
When $\beta|_{\Omega^-}, \beta|_{\Omega^+}, w|_{\Omega^+}$ and $w|_{\Omega^+}$ are constant,
one can derive the 2D matrices as tensor-products of 1D matrices.
With this in mind, for two basis functions $\phi_{i_x}^p(x) \phi_{i_y}^p(y), \phi_{j_x}^p(x) \phi_{j_y}^p(y) \in {\mathcal E}^p_{h}$, the matrix entries are
\begin{equation} \label{eq:sepa2d}
(w \phi_{i_x}^p(x) \phi_{i_y}^p(y), \phi_{j_x}^p(x) \phi_{j_y}^p(y)) = (w\phi_{i_x}^p(x), \phi_{j_x}^p(x) ) \cdot (\phi_{i_y}^p(y),  \phi_{j_y}^p(y)),
\end{equation}
where
\begin{equation}
\begin{aligned}
(w\phi_{i_x}^p(x), \phi_{j_x}^p(x) ) & = \int_a^b w\phi_{i_x}^p(x) \phi_{j_x}^p(x) \ \text{d} x, \\
(\phi_{i_y}^p(y),  \phi_{j_y}^p(y)) & = \int_c^d \phi_{i_y}^p(y)  \phi_{j_y}^p(y) \ \text{d} y.
\end{aligned}
\end{equation}
Similarly,
\begin{equation} \label{eq:sepb2d}
\begin{aligned}
(\beta \nabla \phi_{i_x}^p(x) \phi_{i_y}^p(y), \nabla \phi_{j_x}^p(x) \phi_{j_y}^p(y)) & = (\beta \partial_x \phi_{i_x}^p(x), \partial_x \phi_{j_x}^p(x) ) \cdot (\phi_{i_y}^p(y),  \phi_{j_y}^p(y)), \\
& \quad + (\beta \phi_{i_x}^p(x), \phi_{j_x}^p(x) ) \cdot (\partial_y \phi_{i_y}^p(y), \partial_y \phi_{j_y}^p(y)), \\
\int_{\Gamma} [\phi_{i_x}^p(x) \phi_{i_y}^p(y)]_\alpha [\phi_{j_x}^p(x) \phi_{j_y}^p(y)]_\alpha d\sigma & = \int_c^d [\phi_{i_x}^p(x) \phi_{i_y}^p(y)]_\alpha [\phi_{j_x}^p(x) \phi_{j_y}^p(y)]_\alpha d y \\
& = ([\phi_{i_x}^p(x)]_\alpha, [\phi_{j_x}^p(x)]_\alpha) \cdot (\phi_{i_y}^p(y),  \phi_{j_y}^p(y)).
\end{aligned}
\end{equation}
where $([\phi_{i_x}^p(x)]_\alpha, [\phi_{j_x}^p(x)]_\alpha) = [\phi_{i_x}^p(x)]_\alpha \cdot [\phi_{j_x}^p(x)]_\alpha$.

Using this property of the discretization, the GFEM \eqref{FEMweak2D} leads to the matrix problem of the form
\begin{equation} \label{eq:sepmp2d}
(K_{x,\beta} \otimes M_y + M_{x,\beta} \otimes K_y) U + (M_{x,{w}} \otimes M_y) U + (J_{x,\lambda} \otimes M_y)U = F,
\end{equation}
\rev{where the 1D matrices can be constructed from the inner products above and where $U\in \mathbb R^{(p(N_x+1)+2)\cdot(pN_y-1)}$ is a stack of column vectors $U_{i,y}\in \mathbb R^{pN_y-1}, 1\leq i\leq p(N_x+1)+2$. }

\subsection{The approximability of the enriched space}
Let $\hat R=[0,1]\times [0,1]$ be the reference element of the tensor mesh of $\Omega=\cup_k R_k$, i.e, there exists a {\it unique} bijective affine transformation $F_k$ such that $F_k(\hat R)=R_k$ for every $k$.  The interface $x=\hat\alpha$ separates $\hat R$ into $\hat R^+$ and $\hat R^-$.
Then the 2D counterpart of Theorem \ref{general} still holds.
\begin{theorem}[Local approximation] \label{general2D}
Let $\chi_i,i=1,2$ be the characteristic functions of $\hat R^-$ and $\hat R^+$, respectively. Define the space
\begin{equation}\label{piecewise2D}
W=\{w: w=w_1\chi_1+w_2\chi_2, w_i\in \mathbb Q_p:=\mathbb P_p\otimes\mathbb P_p,i=1,2\}.
\end{equation}
\rev{Then
\[  W\subset {\mathcal V}_{h,loc}^{enr},\]
where ${\mathcal V}_{h,loc}^{enr}$ is the local enriched space, ${\mathcal V}_{h,loc}^{enr}=F_k^{-1}({\mathcal E}^p_h|_{R^k})$.}
\end{theorem}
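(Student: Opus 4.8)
The plan is to exploit the tensor-product structure of the enriched space in \eqref{enriched2D} and reduce the two-dimensional claim to the one-dimensional approximation result of Theorem \ref{general}. Because the interface is the vertical line $x=\hat\alpha$, the characteristic functions $\chi_1,\chi_2$ depend on $x$ alone, and by construction the enrichment acts only in the $x$-direction while the $y$-direction carries the plain $\mathbb P_p$ space. Pulling $\mathcal E^p_h|_{R^k}$ back through the affine map $F_k^{-1}$ therefore factors the local enriched space as
\[
\mathcal V_{h,loc}^{enr}=V_{h,loc}^{enr}\otimes\text{span}\{\ell_j(y)\}_{j=0}^{p},
\]
where $V_{h,loc}^{enr}$ is the 1D local enriched space of \eqref{enr} in the $x$-variable and $\{\ell_j\}_{j=0}^p$ is the degree-$p$ Lagrange basis at the nodes $\eta_j=j/p$ in the $y$-variable. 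I would first record this identification explicitly, so that membership in $\mathcal V_{h,loc}^{enr}$ reduces to writing a function as a finite sum of products $B(x)\ell_j(y)$ with $B\in V_{h,loc}^{enr}$.

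Next I would take an arbitrary $w=w_1\chi_1+w_2\chi_2\in W$ with $w_i\in\mathbb Q_p=\mathbb P_p\otimes\mathbb P_p$ and process the two variables separately. For each $y$-node $\eta_j$ define the slice
\[
w_{(j)}(x):=w(x,\eta_j)=\chi_1(x)\,w_1(x,\eta_j)+\chi_2(x)\,w_2(x,\eta_j).
\]
Since $x\mapsto w_i(x,\eta_j)$ lies in $\mathbb P_p$ for each fixed $j$, the slice $w_{(j)}$ is exactly a piecewise-$\mathbb P_p$ function of the form covered by Theorem \ref{general}, whence $w_{(j)}\in V_{h,loc}^{enr}$.

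Finally I would reassemble $w$ by Lagrange interpolation in $y$. For every fixed $x\ne\hat\alpha$ the slice $y\mapsto w(x,y)$ equals either $w_1(x,\cdot)$ or $w_2(x,\cdot)$, each a polynomial of degree at most $p$ in $y$; hence interpolation at $\{\eta_j\}_{j=0}^p$ is exact and
\[
w(x,y)=\sum_{j=0}^{p}w_{(j)}(x)\,\ell_j(y).
\]
As each $w_{(j)}\in V_{h,loc}^{enr}$, this exhibits $w$ as an element of $V_{h,loc}^{enr}\otimes\text{span}\{\ell_j\}_{j=0}^p=\mathcal V_{h,loc}^{enr}$, giving the desired inclusion $W\subset\mathcal V_{h,loc}^{enr}$. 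I expect no serious obstacle here, only the bookkeeping that justifies the tensor factorization of $\mathcal V_{h,loc}^{enr}$: one must check that the affine pullback $F_k^{-1}$ preserves the separation of variables (it does, since $F_k$ is axis-aligned, maps vertical lines to vertical lines, and acts coordinatewise), and that the bidegree structure of $\mathbb Q_p$ is precisely what makes the $y$-interpolation step exact. Once these two points are in place the reduction to Theorem \ref{general} is immediate, and no new analysis in the $x$-direction is required.
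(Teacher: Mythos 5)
Your proposal is correct and follows essentially the same route as the paper's proof: both exploit that the axis-aligned affine pullback $F_k^{-1}$ preserves the tensor-product structure of ${\mathcal E}^p_h=\overline{S}^{p}_{h}\otimes{\mathcal S}^{p}_{h}$, and both reduce the 2D claim to the 1D result of Theorem \ref{general}. The only difference is bookkeeping: you decompose $w$ by slicing at the $y$-Lagrange nodes and reassemble via exactness of degree-$p$ interpolation, whereas the paper decomposes each $w_i\in\mathbb Q_p$ into separable terms $X_i(x)Y_i(y)$ and applies Theorem \ref{general} to the $x$-factors.
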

\begin{proof} First note that each $F^{-1}_k$ sends the interface $x=\alpha$ in an interface element $R_k$ to an interface $\hat x=\hat\alpha$ in $\hat R$.
It suffices to consider $w_i$ having the special form of basis functions
\[     w_i(x,y)=X_i(x)Y_i(y), X_i, Y_i\in \mathbb P_p, i=1,2.\]
Then using Theorem \ref{general} to conclude  $X_i$ belongs to the $\bar S_h^p$-space on $[0,1]$ and the fact that $F_k^{-1}$ preserves the tensor product structure of \eqref{enriched2D} complete the proof.
\end{proof}
Using the above theorem, it is not hard to see that the 2D version of Lemma \ref{local} and Lemma \ref{interpo} holds respectively due to the optimal order of approximation
of $\mathbb Q_p(\hat R)$ to $H^2(\hat R)$ functions under the regular mesh assumption.
Consequently, the 2D version of Theorem \ref{thm2.6} holds and we have

\begin{theorem}[Error estimate]  \label{thm2.62D}
Let ${u}$ be the exact solution  and ${u}_{h}$ be the approximate solution of (\ref{ExactWeak2D})  and (\ref{FEMweak2D}), respectively. Then
there exists a constant $C>0$ such that
\begin{equation}\label{main2D}
\|{u}-{u}_{h}\|_{0,\Omega^-\cup \Omega^+} + h\|{u}-{u}_{h}\|_{1,\Omega^-\cup \Omega^+} \leq Ch^{p+1}\|{u}\|_{p+1,\Omega^{-}\cup \Omega^{+}}
\end{equation}
provided that the norm of the exact solution on the right side is finite.
The constant $C$ is independent of $h$ and $\alpha$ but depends on the ratio  $\rho:=\frac{\beta^{*}}{\beta_{*}}$ with $\beta^{*} = \sup_{{\bf x}\in \bar \Omega}\beta({\bf x})$ and
$\beta_{*} = \inf_{{\bf x}\in \bar\Omega}\beta({\bf x})$.
\end{theorem}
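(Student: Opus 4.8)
The plan is to transcribe the proof of the 1D estimate (Theorem~\ref{thm2.6}) into the 2D tensor-product setting, following the same five-step skeleton: Galerkin orthogonality, coercivity and boundedness of $a(\cdot,\cdot)$, C\'ea's lemma to pass to a best-approximation problem, the 2D interpolation estimate to control that best approximation, and an Aubin--Nitsche duality argument for the $L^2$ bound. First I would subtract \eqref{ExactWeak2D} from \eqref{FEMweak2D} to obtain $a(u-u_h,v_h)=0$ for all $v_h\in{\mathcal E}^p_h$. Since $\lambda>0$ and $w\ge 0$, the jump and reaction contributions to $a(\cdot,\cdot)$ are nonnegative, so $a(\cdot,\cdot)$ is coercive with constant $\beta_*$ and bounded with constant $\beta^*$ on $H^1_{\alpha,0}(\Omega)$ in the broken $H^1$-seminorm (combined with the Poincar\'e inequality). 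Exactly as in 1D this yields, for every $q_h\in{\mathcal E}^p_h$,
\begin{equation*}
\beta_*|u-u_h|_{1,\Omega^-\cup\Omega^+}^2\le a(u-u_h,u-u_h)=a(u-u_h,u-q_h)\le\beta^*|u-u_h|_{1,\Omega^-\cup\Omega^+}\,|u-q_h|_{1,\Omega^-\cup\Omega^+},
\end{equation*}
whence $|u-u_h|_{1,\Omega^-\cup\Omega^+}\le\rho\,\inf_{q_h}|u-q_h|_{1,\Omega^-\cup\Omega^+}$ with $\rho=\beta^*/\beta_*$.

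Next I would take $q_h={\mathcal I}_{h,E}u$, the 2D global enriched interpolant, and invoke the 2D version of Lemma~\ref{interpo} to get $|u-{\mathcal I}_{h,E}u|_{1,\Omega^-\cup\Omega^+}\le Ch^p\|u\|_{p+1,\Omega^-\cup\Omega^+}$. The justification of this interpolation estimate is the real content inherited from the earlier development: by Theorem~\ref{general2D} the piecewise $\mathbb Q_p$ function agreeing with $u$ on $\hat R^\pm$ lies in the local enriched space, because $F_k^{-1}$ preserves the tensor-product structure of ${\mathcal E}^p_h=\overline{S}^{p}_{h}\otimes{\mathcal S}^{p}_{h}$; standard tensor-product interpolation theory on the reference element then delivers the optimal order $h^p$ in the broken $H^1$-seminorm under the regular-mesh assumption. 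Combining this with the C\'ea estimate above gives the $H^1$ half of \eqref{main2D}.

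For the $L^2$ bound I would run the usual duality argument. Let $\varphi$ solve the dual problem $a(v,\varphi)=(u-u_h,v)$ for all $v\in H^1_{\alpha,0}(\Omega)$; then Galerkin orthogonality gives
\begin{equation*}
\|u-u_h\|_{0,\Omega}^2=a(u-u_h,\varphi-{\mathcal I}_{h,E}\varphi)\le\beta^*|u-u_h|_{1,\Omega^-\cup\Omega^+}\,|\varphi-{\mathcal I}_{h,E}\varphi|_{1,\Omega^-\cup\Omega^+},
\end{equation*}
and bounding the second factor by $Ch\,\|\varphi\|_{2,\Omega^-\cup\Omega^+}$ produces the extra power of $h$. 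The step I expect to require the most care is the regularity of the dual solution, namely $\varphi\in H^2_\alpha(\Omega)$ with $\|\varphi\|_{2,\Omega^-\cup\Omega^+}\le C\|u-u_h\|_{0,\Omega}$. This is exactly where the straightness of the interface is genuinely used: because $\Gamma=\{x=\alpha\}$ is a line parallel to a side of $\Omega$, the dual problem decouples into two smooth elliptic problems on $\Omega^-$ and $\Omega^+$ coupled only through the Robin-type jump conditions on a flat interface, so piecewise $H^2$ regularity is available and the duality closes. With a curved interface this piecewise regularity would be far more delicate, which is why the theorem is confined to the straight-interface geometry.
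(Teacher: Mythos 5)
Your proposal is correct and takes essentially the same approach as the paper: the paper's entire proof of Theorem \ref{thm2.62D} consists of invoking Theorem \ref{general2D} to obtain 2D versions of Lemmas \ref{local} and \ref{interpo} via tensor-product approximation and then declaring that the 1D argument of Theorem \ref{thm2.6} (Galerkin orthogonality, coercivity/boundedness, C\'ea, interpolation, duality) carries over, which is precisely the skeleton you transcribe. Your closing discussion of the piecewise $H^2$ regularity of the dual solution on the flat interface is actually more careful than the paper, which dismisses this step as ``the usual duality argument,'' and it is consistent with the paper's standing regularity assumptions.
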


\section{Numerical Examples}\label{NumExample}
In this section, we present numerical examples to confirm the theoretical findings. In subsection \ref{confirm} we use Problem 4.1 to verify for the 1D case optimal order convergence \eqref{2nd} and nodal exactness \eqref{exact}
in Theorem \ref{2order} using linear, quadratic, and cubic elements. For the 2D tensor case, we use Problem 4.2 to verify optimal order convergence rates in
Theorem \ref{thm2.62D}. In subsection \ref{quacubic}, we test our methods on a much more complicated
physical example of the multi-layer porous wall model for the drug-eluting stents \cite{Pontrelli} that has been studied using IFEM \cite{Wang, Zhang1, Zhang2,  Zhang}.

\subsection{Numerical Verification of Theorem \ref{2order}}\label{confirm}
{\bf Problem 4.1.} Consider
\begin{equation}\label{prob3.1}
-(\beta u')' = f(x),  \quad u(0)=u(1)=0,
\end{equation}
where
\begin{equation}\label{fx}
f(x)=
\begin{cases}
x^m & x\in [0,\alpha), \\
(x-1)^m & x\in (\alpha,1].
\end{cases}
\end{equation}
 $m$ is a nonnegative integer. The interface point is located at $\alpha$ and

\begin{equation}\label{move}
\beta(x) =  \begin{cases}
      \beta^{-} & x\in [0,\alpha), \\
	 \beta^{+} & x\in (\alpha,1].
   \end{cases}
\end{equation}
The interface jump conditions are
\[   [u]_\alpha=\gamma[u']_\alpha\quad \text{ and }\quad  [\beta u']_\alpha=0.\]
where $\gamma = -\lambda\beta^{+}\beta^{-}/[\beta]_{\alpha}$. In the numerical experiment,
we set $\lambda=1$, $\beta^{-}=100$, $\beta^{+}=1$, $\alpha=1/\pi$ and $m=6$.

The exact solution is

\begin{equation}\label{exctP}
u(x) =  \begin{cases}
     \displaystyle\frac{-1}{(m+1)(m+2)\beta^{-}}x^{m+2}+c_1x & x\leq\alpha, \\
	\displaystyle \frac{-1}{(m+1)(m+2)\beta^{+}}(x-1)^{m+2}+c_2(x-1)  & x\geq\alpha,
   \end{cases}
\end{equation}
\[
\begin{bmatrix}
c_1\\c_2
\end{bmatrix}
=\dfrac{1}
{
(\alpha-1-\gamma)\beta^-+(\gamma-\alpha)\beta^+
}
\begin{bmatrix}
\alpha-\gamma-1&\beta^+\\
\alpha-\gamma &\beta^{-}
\end{bmatrix}
\begin{bmatrix}
R_1\\
R_2
\end{bmatrix},
\]
where
\[ R_1=\frac{\alpha^{m+1}}{m+1}-\frac{(\alpha-1)^{m+1}}{m+1}\]
and
\[ R_2=\frac{\gamma\alpha^{m+1}}{(m+1)\beta^-}-\frac{\gamma(\alpha-1)^{m+1}}{(m+1)\beta^+}+\frac{(\alpha-1)^{m+2}}{(m+1)(m+2)\beta^+}-\frac{\alpha^{m+2}}{(m+1)(m+2)\beta^-}.\]

In Table \ref{tab:1}, we list the nodal errors, and SCN, using linear, quadratic, and cubic elements, respectively.
In Figure \ref{fig:1}, we demonstrate optimal $L^{2}$ and broken $H^{1}$ errors using linear, quadratic, and cubic elements, respectively

\begin{figure}[h]
\centering
\begin{subfigure}{.33\textwidth}
  \centering
  \includegraphics[width=\linewidth]{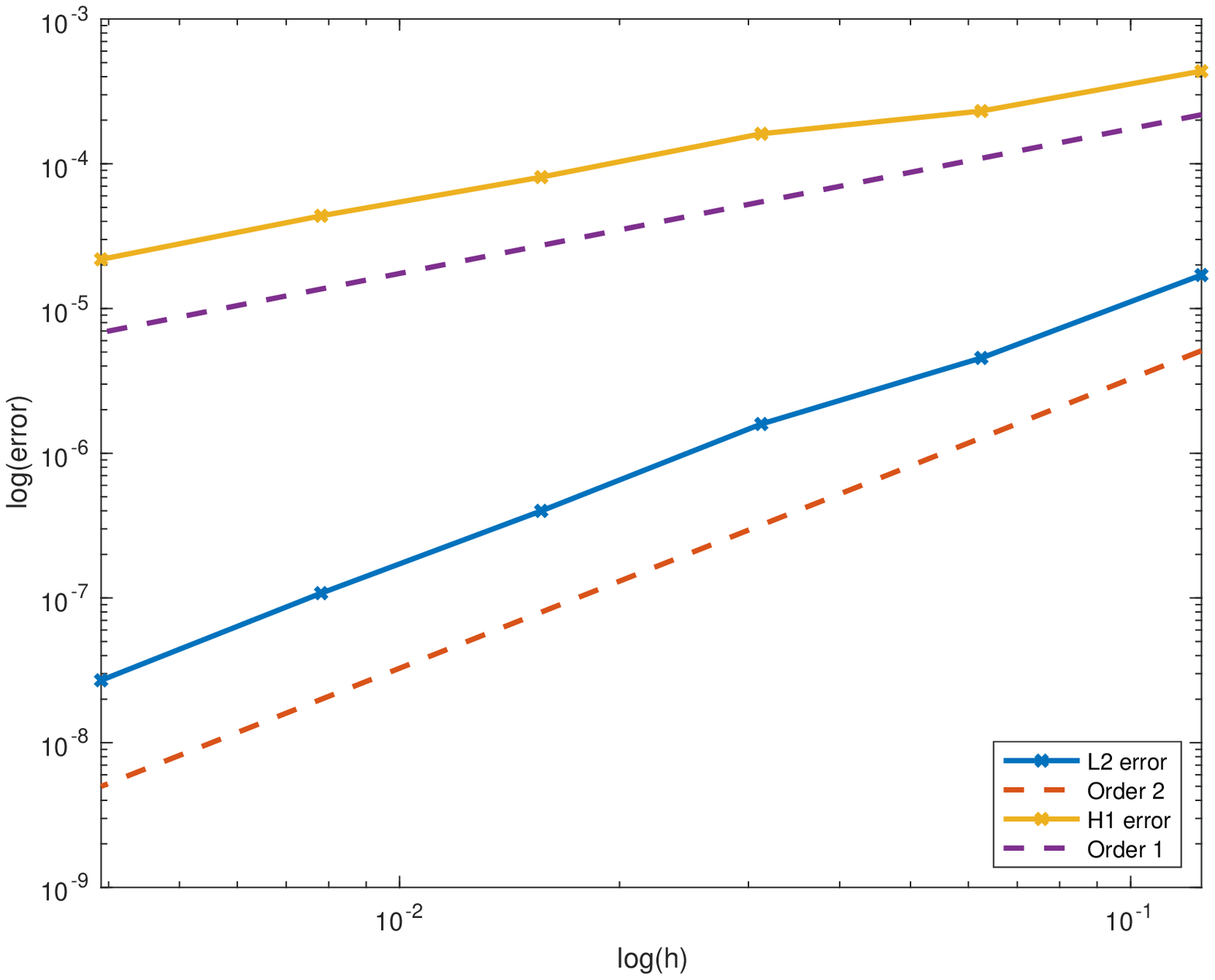}
  \caption{Linear}
  \label{fig:sub1}
\end{subfigure}%
\begin{subfigure}{.33\textwidth}
  \centering
  \includegraphics[width=\linewidth]{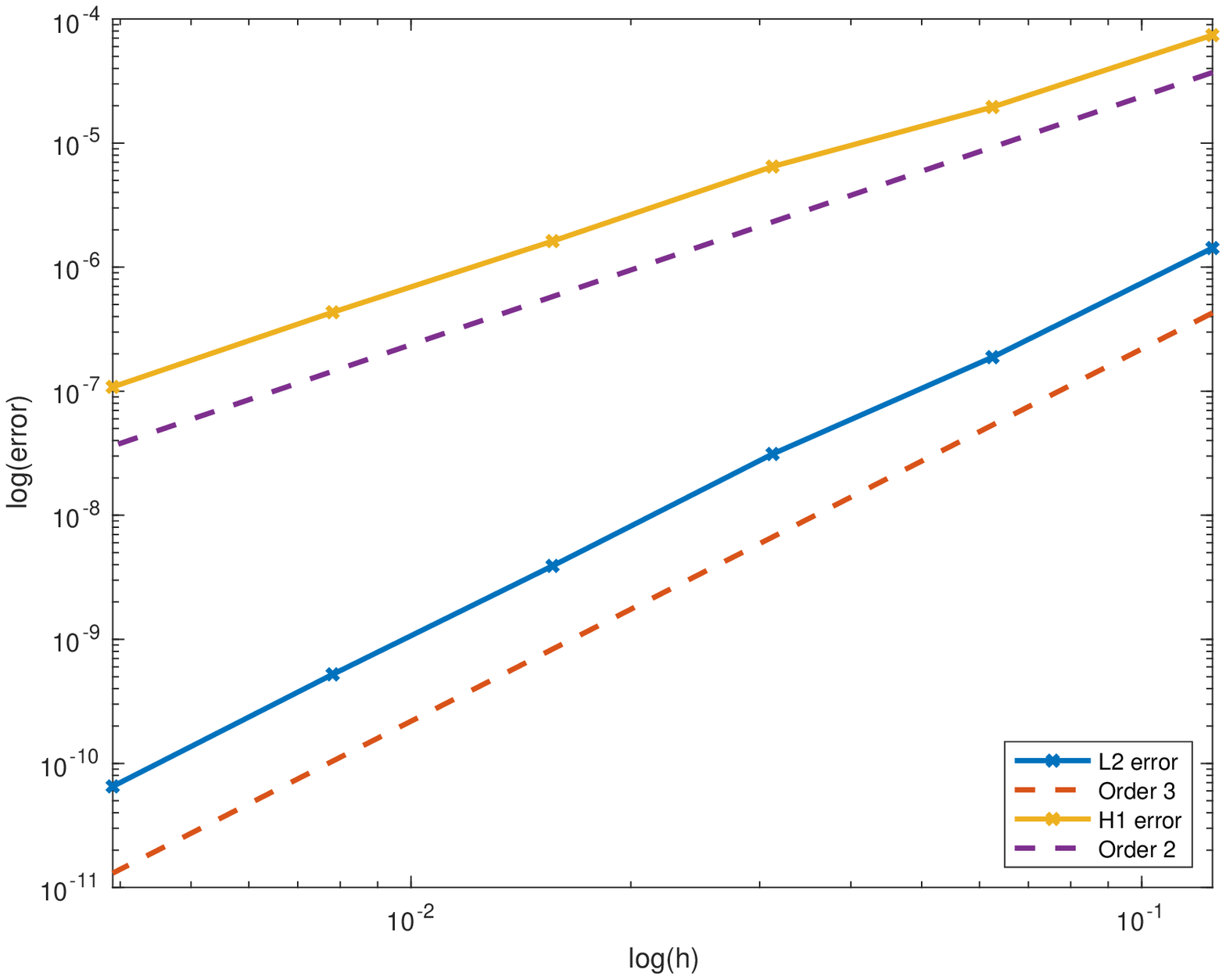}
  \caption{Quadratic}
  \label{fig:sub2}
\end{subfigure}
\begin{subfigure}{.33\textwidth}
  \centering
  \includegraphics[width=\linewidth]{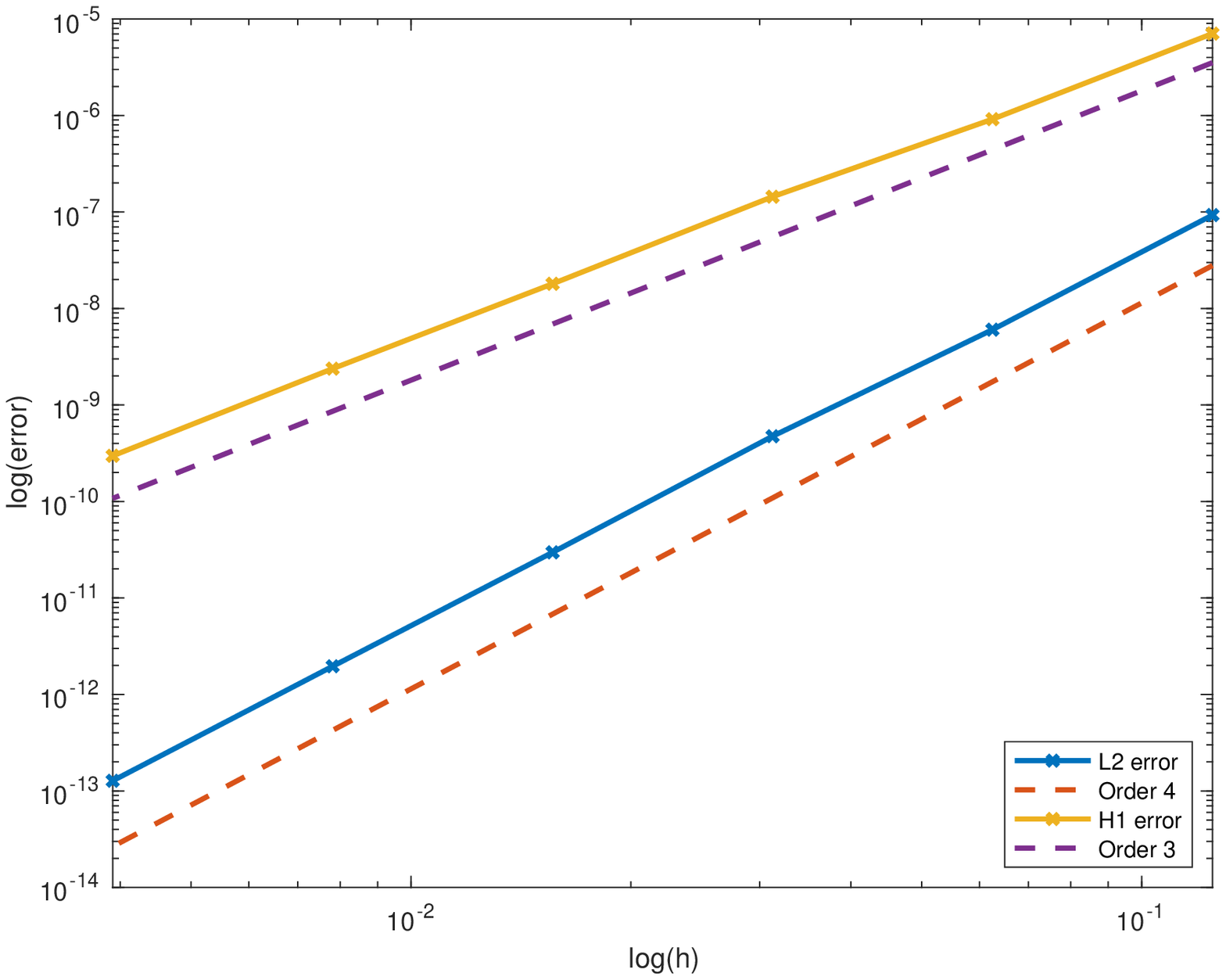}
  \caption{Cubic}
  \label{fig:sub2}
\end{subfigure}
\caption{$L_{2}$ and broken $H^{1}$ convergence rates for Problem 4.1 for linear, quadratic, and cubic basis functions where
$\beta^{-}=100$, $\beta^{+}=1$ and $m=6$.}
\label{fig:1}
\end{figure}

\begin{table}
\begin{center}
\begin{tabular}{l cc cc cc}
\toprule
Problem 4.1 & \multicolumn{2}{c}{Linear} & \multicolumn{2}{c}{Quadratic} & \multicolumn{2}{c}{Cubic} \\
\cmidrule(lr){2-3} \cmidrule(lr){4-5} \cmidrule(lr){6-7}
 & Nodal error & SCN & Nodal error & SCN & Nodal error & SCN \\
$ h=1/8 $ &		 	 3.30e-17 &	 	 2.15e3  &		 1.06e-16 &	 	 1.07e4 &		 4.86e-17 &	 	 4.07e4 \\
 \hline
$ h=1/16 $ &			 2.46e-16 &	 	 4.12e3 &	  8.67e-17 &		 1.54e4 &	 	 2.47e-16 &		 1.21e5 \\
 \hline
$ h=1/32 $ &		 	 5.50e-16 &	 	 8.58e3 &	 	 1.10e-15 &		 5.38e4 &		 1.65e-16 &	 	 2.51e5 \\
 \hline
$ h=1/64 $ &	 	 	 1.45e-15 &	 	 1.65e4 &		 2.05e-16 &	 	 6.77e4 &	  2.39e-15 &	 	 8.64e4 \\
 \hline
$ h=1/128 $ &	 	 6.96e-15 &		 3.43e4 &	 	 4.99e-16 &		 7.68e4 &	 	 7.30e-15 &		 1.73e5 \\
 \hline
$ h=1/256 $ &		 	 1.66e-15 &	 	 6.69e4 &	 	 3.94e-15 &	 	 4.75e5  &	 	 6.66e-14 &	 	 3.46e5 \\
\bottomrule
\end{tabular}
 \caption{$L^{2}$, broken $H^{1}$, nodal errors, and scaled condition numbers (SCN) with discontinuous jump conditions for Problem 4.1  for linear, quadratic, and cubic basis functions where $\beta^{-}=100$ = $\beta^{+}=1$ and $m=6$.}
 \label{tab:1}
\end{center}
\end{table}


{\bf Problem 4.2.} Consider a 2D interface problem on the unit square
\begin{equation}\label{prob3.2}
-\nabla \cdot (\beta \nabla U) = F(x,y), \text{ on } \Omega=[0,1]^2,\quad    U=0 \text{ on }\partial \Omega,
\end{equation}
with a vertical interface line $x=\alpha$ dividing $\Omega$ and the material property $\beta$ depends only on $x$:
\begin{equation}\label{move}
\beta(x,y)=\beta(x) =  \begin{cases}
      \beta^{-} & x\in [0,\alpha), \\
	 \beta^{+} & x\in (\alpha,1].
   \end{cases}
\end{equation}
The interface jump conditions are
\[   [U]_\alpha=\gamma[\nabla U\cdot {\bf n}]_\alpha\quad \text{ and }\quad  [\beta \nabla U\cdot {\bf n}]_\alpha=0\]
where $\gamma = -\lambda\beta^{+}\beta^{+}/[\beta]_{\alpha}$.

For this example we take $U(x,y)=u(x)Y(y)$ with $Y(y)=y(y-1)$ and
\begin{equation}\label{n3}  u(x)=
\begin{cases}
\beta^{+}\sin \pi x   \quad x\leq \alpha\\
\beta^{-}\sin \pi x      \quad x>\alpha
\end{cases}
\end{equation}
 so that the boundary conditions and the jump conditions
can be easily satisfied. The right hand side function $F(x,y)$ in \eqref{prob3.2} is then defined by
  \begin{equation}\label{n4}
F(x,y)=\beta^+\beta^-\sin\pi x(\pi^2y(y-1)-2).
 \end{equation}
Notice that $\gamma=-\frac 1 \pi\tan(\pi\alpha)$.
In Figure \ref{fig:2}, we demonstrate optimal $L^{2}$ and broken $H^{1}$ errors using linear, quadratic, and cubic elements, respectively.
In Table \ref{tab:2}, we list the nodal errors, and SCN, using linear, quadratic, and cubic elements, respectively.

\begin{figure}[h]
\centering
\begin{subfigure}{.33\textwidth}
  \centering
  \includegraphics[width=\linewidth]{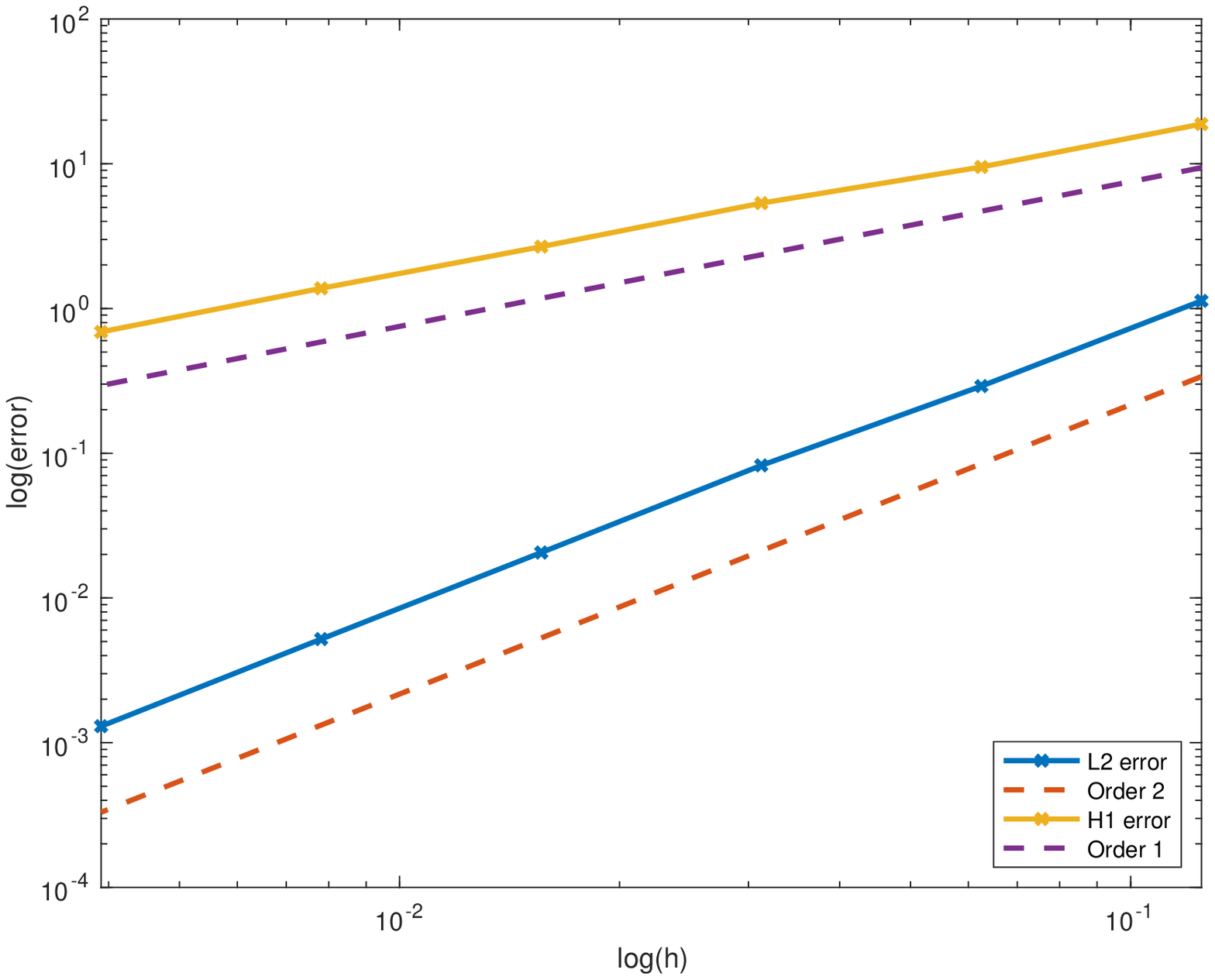}
  \caption{Linear}
  \label{fig:sub1}
\end{subfigure}%
\begin{subfigure}{.33\textwidth}
  \centering
  \includegraphics[width=\linewidth]{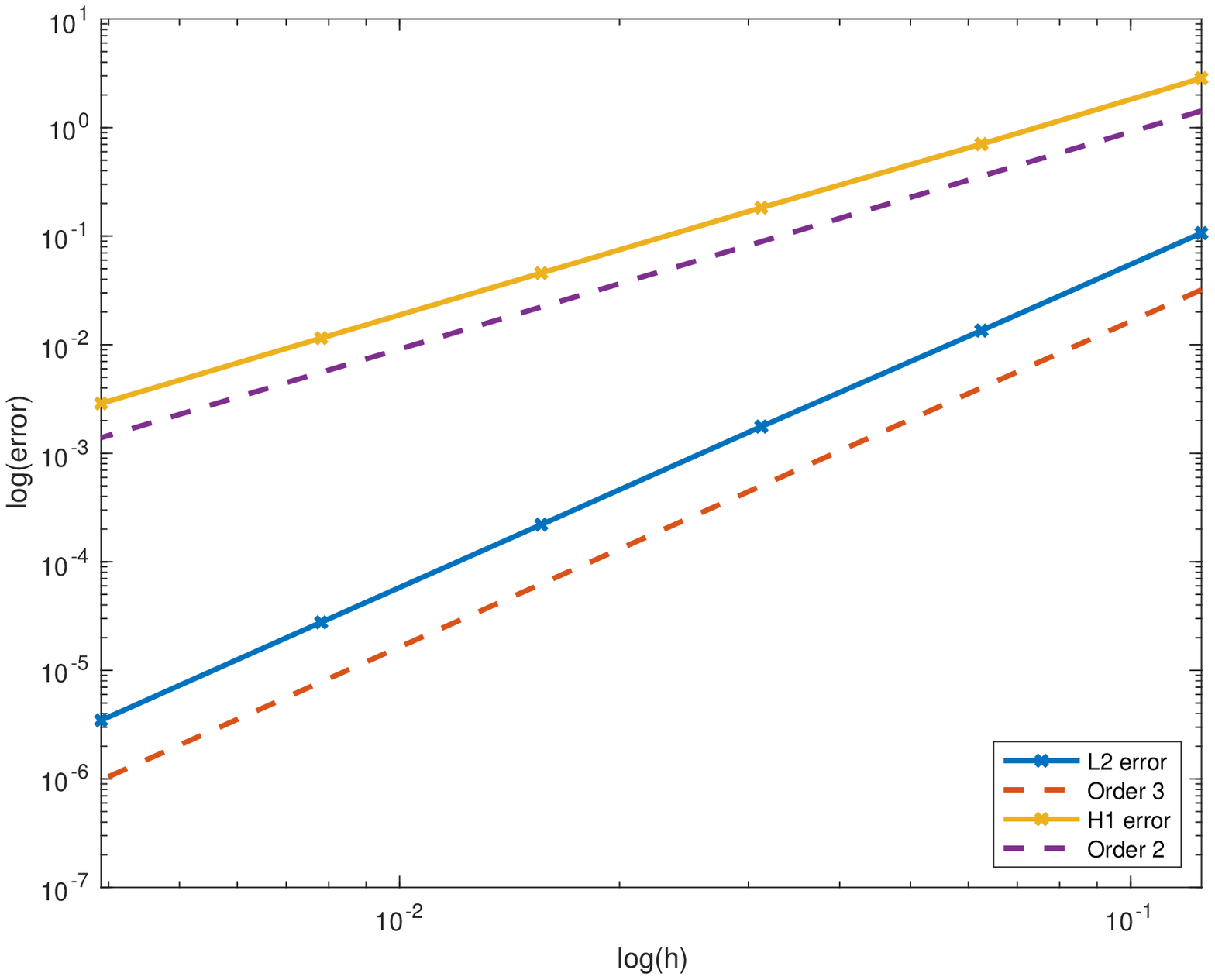}
  \caption{Quadratic}
  \label{fig:sub2}
\end{subfigure}
\begin{subfigure}{.33\textwidth}
  \centering
  \includegraphics[width=\linewidth]{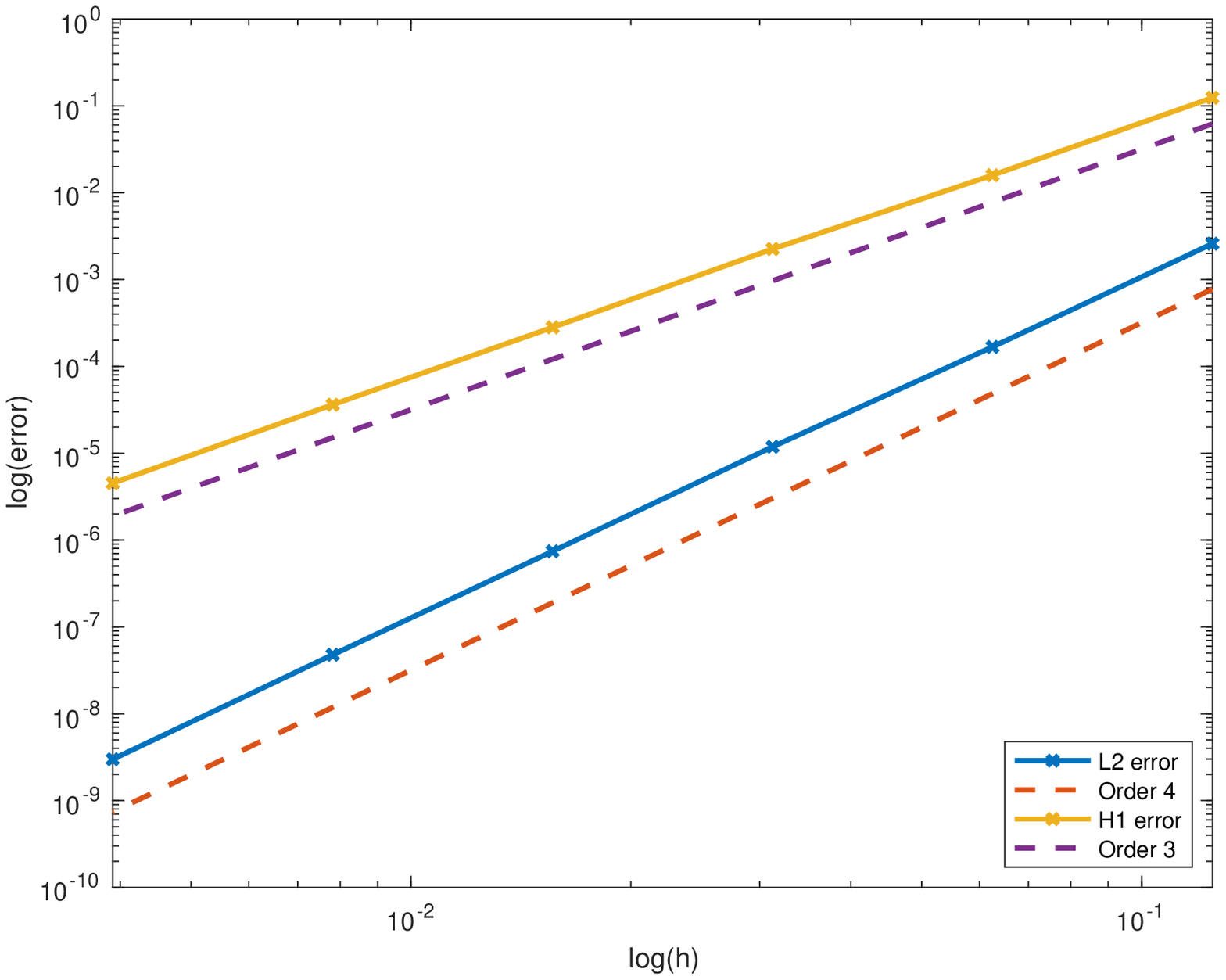}
  \caption{Cubic}
  \label{fig:sub2}
\end{subfigure}
\caption{$L_{2}$ and broken $H^{1}$ convergence rates for Problem 4.2 for linear, quadratic, and cubic basis functions}
\label{fig:2}
\end{figure}

\begin{table}
\begin{center}
\begin{tabular}{l cc cc cc}
\toprule
Problem 4.2 & \multicolumn{2}{c}{Linear} & \multicolumn{2}{c}{Quadratic} & \multicolumn{2}{c}{Cubic} \\
\cmidrule(lr){2-3} \cmidrule(lr){4-5} \cmidrule(lr){6-7}
 & Nodal error & SCN & Nodal error & SCN & Nodal error & SCN \\
$ h=1/8 $ &	6.54e-1 & 9.04e2  & 3.79e-3 & 5.98e5 & 5.78e-4 & 2.26e5 \\
 \hline
$ h=1/16 $ &	 2.44e-1 & 2.71e4 & 3.04e-4 & 6.32e6 & 4.82e-5 & 8.10e5 \\
 \hline
$ h=1/32 $ &	 6.10e-2 & 4.55e4 & 1.90e-5 & 7.98e5 & 3.10e-6 & 3.46e7\\
 \hline
$ h=1/64 $ &		1.68e-2 & 1.92e5 & 1.26e-6 & 4.82e6 & 2.02e-7 & 2.71e8 \\
 \hline
$ h=1/128 $ &	4.21e-3 & 4.77e5	 & 7.87e-8 & 1.32e7 & 1.24e-8 & 1.68e9\\
 %
 %
\bottomrule
\end{tabular}
 \caption{$L^{2}$, broken $H^{1}$, nodal errors, and scaled condition numbers (SCN) with discontinuous jump conditions for  Problem 4.2   for linear, quadratic, and cubic basis functions.}
 \label{tab:2}
\end{center}
\end{table}

\newpage

\subsection{Multi-layer Porous Wall Model}\label{quacubic}
In this subsection, we test our method using the multi-layer porous wall model for the drug-eluting stents \cite{Pontrelli}. In this one-dimensional wall model of layers, a drug is injected or released at an interface and gradually diffuses rightward. The concentration is thus discontinuous across the injection interface and continuous in the other layers. At all interface points, a zero-flux condition is imposed. We run tests using the enriched linear, quadratic, and cubic finite element spaces.
For Problem 1, we place only one interface point to model the layer where the drug is delivered.
For Problem 2, we place two interfaces to model the layers where the concentration is continuously spread.
Finally, for Problem 3 we combine the previous two cases and place three interface points to simulate the full wall model. In each of the three problems, we display run results using linear, quadratic, and cubic-enriched finite elements. We confirm that our method is indeed an SGFEM or SSGFEM \cite{Babuska1, Babuska2, Deng, zhang2019strongly,zhang2020strongly} in the sense that the condition numbers are comparable with those in the continuous case with robustness, and that it has optimal order convergence
  in the $L^2$ and broken $H^1$-norms.

{\bf Problem 1.  Discontinuous Solution.} Consider the two-point boundary value problem with one interface point $\alpha_0=1/9$
\begin{equation}\label{Pro1}
\frac{\partial}{\partial x}\left(-D\frac{\partial u}{\partial x} + 2\delta u \right) + \eta u = f \quad \text{ in }(0,1)
\end{equation}
subject to the the no-flux Neumann condition at $x=0$ and the Dirichlet condition at $x=1$:
\[
D_{0}u'(0) = 0, \quad u(1) = \frac{1}{3}.
\]
Here the drug reaction coefficient $\eta=0$, and the drug diffusivity $D$ and the characteristic convection parameter $\delta$ are piecewise continuous with respect to  $[0,1/9]$ and $[1/9,1]$:
\[D(x) =
\begin{cases}
D_{0} = 1 & x\in[0,1/9] \\
D_1 = \frac{18(n-1)}{10n} & x\in[1/9,1];
\end{cases}\\
\]
\[
\delta(x)=
\begin{cases}
\delta_{0} = 0 & x\in[0,1/9] \\
 \delta = 0.5(9nD_1-8.1(n-1))& x\in[1/9,1].
 \end{cases}
\]
Furthermore, at the interface point $\alpha_{0}$, one of the jump conditions is implicit
\begin{equation}\label{disContJump}
\begin{cases}
[u]_{\alpha_{0}} = \lambda D_{0}u'(\alpha_{0}), \\
-D_{0}u'(\alpha_{0})= -D_{1}u'(\alpha_{0}^{+}) + 2\delta_{1}u(\alpha_{0}^{+})
\end{cases}
\end{equation}
where $\lambda = 1/81(n-1)D_{0}$.
The exact solution
\[u(x) =
\begin{cases}
u_{0} = x^{n-1}/30, &  x\in[0,1/9],\\
 u_1 = x^n/3,& x\in[1/9,1]. \\
\end{cases}
\]
We test the effectiveness of the method with $n=4$ and with the enrichment function in \eqref{eqn:lin1}. The test results using linear, quadratic, and cubic elements are displayed in Table \ref{tab:3} and Figure \ref{fig:3}, respectively.

\begin{figure}[h]
\centering
\begin{subfigure}{.33\textwidth}
  \centering
  \includegraphics[width=\linewidth]{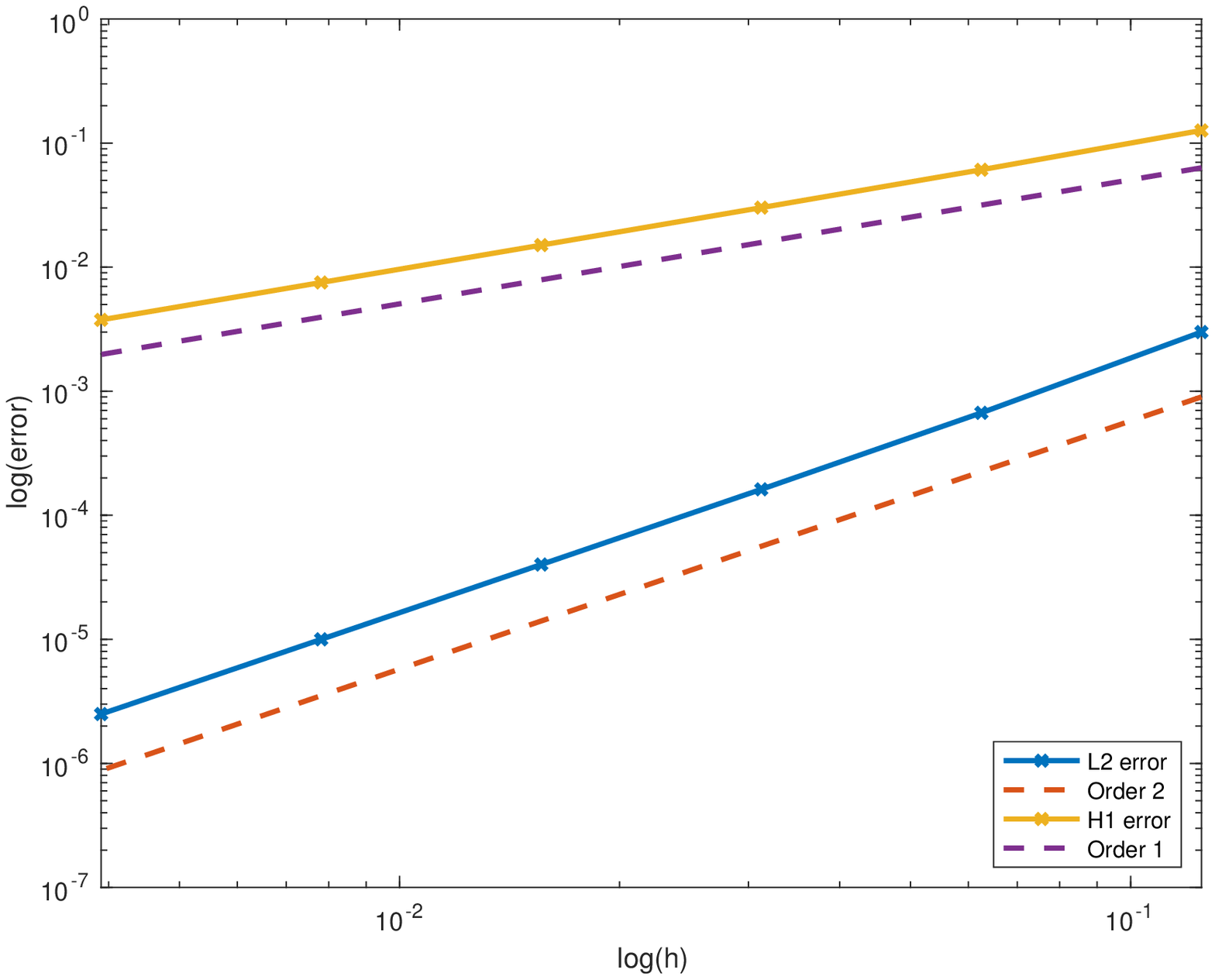}
  \caption{Linear}
  \label{fig:sub1}
\end{subfigure}%
\begin{subfigure}{.33\textwidth}
  \centering
  \includegraphics[width=\linewidth]{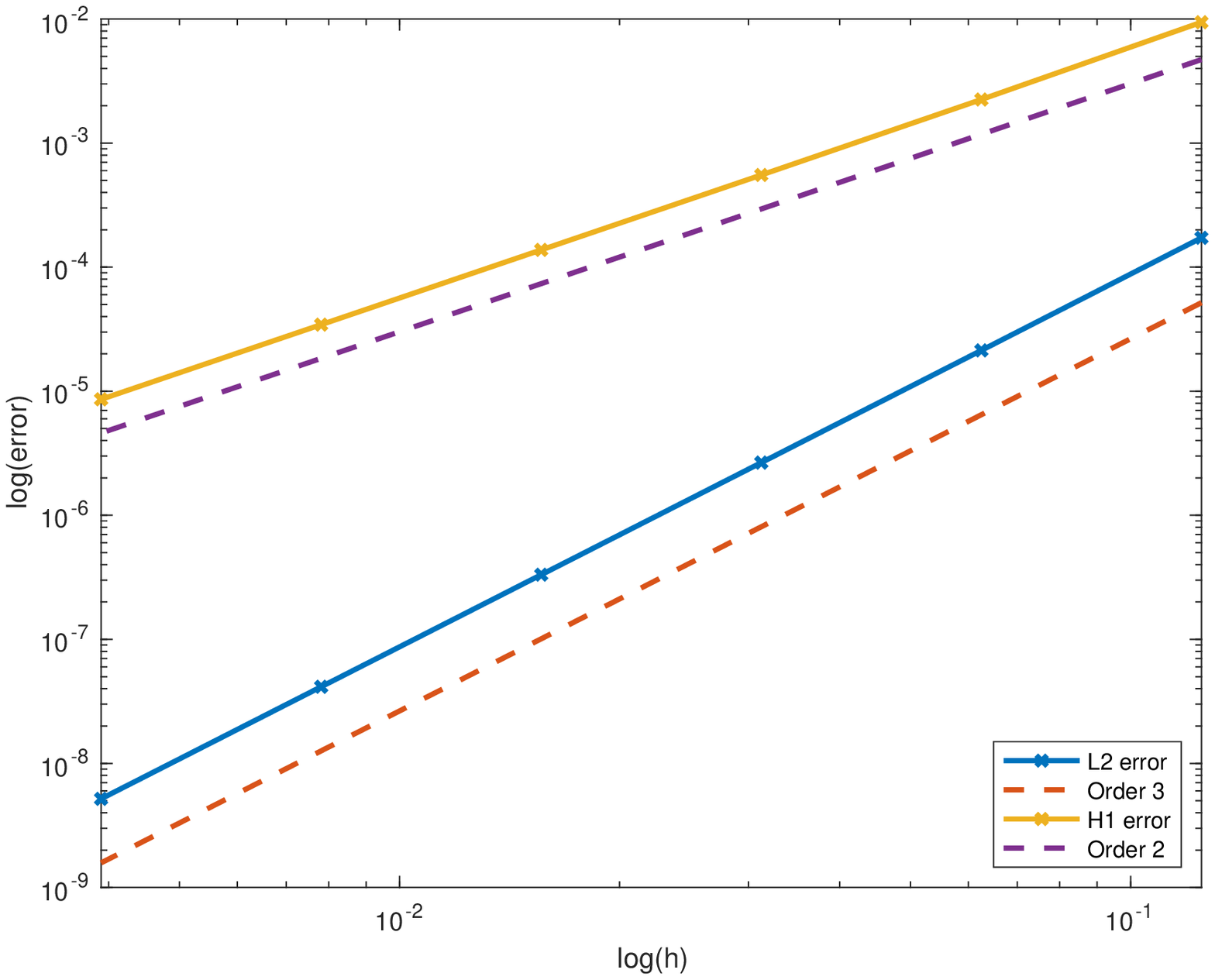}
  \caption{Quadratic}
  \label{fig:sub2}
\end{subfigure}
\begin{subfigure}{.33\textwidth}
  \centering
  \includegraphics[width=\linewidth]{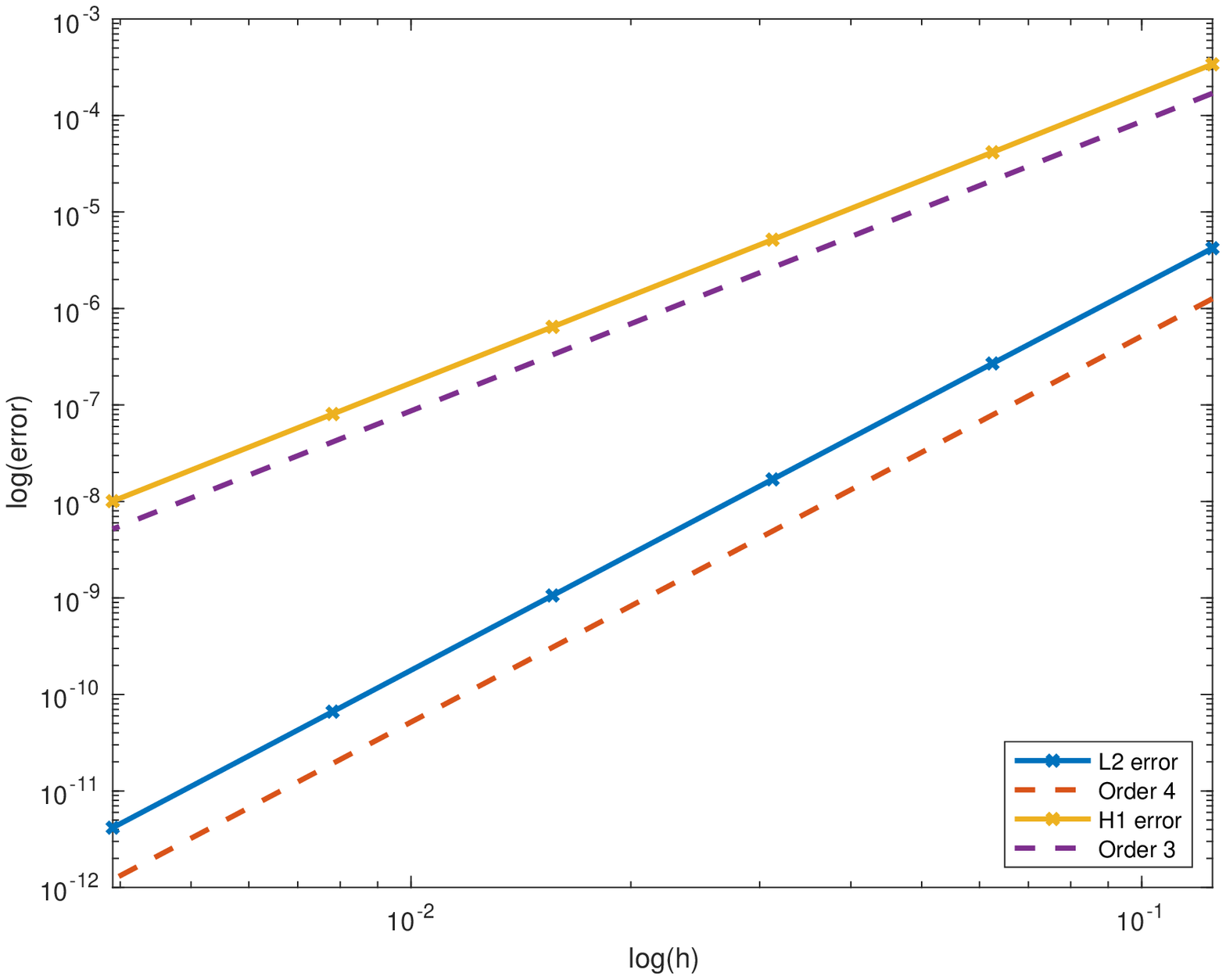}
  \caption{Cubic}
  \label{fig:sub2}
\end{subfigure}
\caption{$L_{2}$ and broken $H^{1}$ convergence rates for Problem 1 for linear, quadratic, and cubic basis functions}
\label{fig:3}
\end{figure}

\begin{table}
\begin{center}
\begin{tabular}{l cc cc cc}
\toprule
Problem 1 & \multicolumn{2}{c}{Linear} & \multicolumn{2}{c}{Quadratic} & \multicolumn{2}{c}{Cubic} \\
\cmidrule(lr){2-3} \cmidrule(lr){4-5} \cmidrule(lr){6-7}
 & Nodal error & SCN & Nodal error & SCN & Nodal error & SCN \\
 $ h=1/8 $ &	 		 1.27e-2 &		 9.94e3 &	 	 2.81e-4 &	 	 7.57e8 &		 2.37e-6 &		 5.56e8 \\
 \hline
$ h=1/16 $ &		 	 2.63e-3 &	 	 4.70e8 &	 	 1.70e-5 &	 	 1.22e7 &		 4.00e-8 &	 	 1.35e7 \\
 \hline
$ h=1/32 $ &	 	 5.86e-4 &		 3.89e6 &	 	 1.09e-6 &		 4.06e6 &		 6.46e-10 &		 3.33e6 \\
 \hline
$ h=1/64 $ &	 	 1.46e-4 &		 1.03e7 &	 	 6.75e-8 &	 	 3.58e6  &	 	 1.02e-11 &	 	 5.19e6 \\
 \hline
$ h=1/128 $ &		 	 3.63e-5 &	 6.13e6 &		 4.22e-9 &	  1.29e6 &		 6.94e-14 &		 1.56e7 \\
 \hline
$ h=1/256 $ &		 	 9.06e-6 &	 1.34e7 &		 2.64e-10 &	 	 2.14e6 &		 8.35e-13 &		 5.14e7 \\
\bottomrule
\end{tabular}
 \caption{$L^{2}$, broken $H^{1}$, nodal errors, and scaled condition numbers (SCN) with discontinuous jump conditions for Problem 1  for linear, quadratic, and cubic basis functions.}
 \label{tab:3}
\end{center}
\end{table}

{\bf Problem 2. Continuous Solution.} Consider the two-point boundary value problem with two interface points $\alpha_1=1/3,\alpha_2= 2/3$
\begin{equation}\label{P2}
\frac{\partial}{\partial x}\left(-D\frac{\partial u}{\partial x} + 2\delta u \right) + \eta u = f, \quad x\in(0,1)
\end{equation}
with the boundary conditions
\[
D_{0}u'(0) = 0 \quad u(1) = 0.
\]
Here with $n=4$
\[D(x) =
\begin{cases}
D_1 = \frac{18(n-1)}{10n} & x\in[0,1/3] \\
D_2 = \frac{6nD_1 - 2\delta}{3(n+1)}  & x\in[1/3,2/3] \\
D_3 = \frac{8\delta_2 - 3(n+1)D_2}{3(n+5)}  & x\in[2/3,1];
\end{cases}
\]

\[\delta(x)=
\begin{cases}
 \delta = 0.5(9nD_1-8.1(n-1)) & x\in[0,1/3]\\
 \delta_2 = 0.5(3(n+1)D_2 - 3nD_1 + 2\delta)  & x\in[1/3,2/3] \\
\delta_3 = 0.25(3(n-1)D_3 - 3(n+1)D_2 + 4\delta_2) & x\in[2/3,1],
\end{cases}
\]
 and $\eta = 10, 1, 0.1$ in respective subintervals.
 At the interface points $\alpha_{i}$ for $i=1,2$, the solution $u$ is continuous and
\begin{equation}
\begin{cases}\label{contJump}
[u]_{\alpha_{i}} = 0, \\
-D_{i}u'(\alpha_{i}^{-}) + 2\delta_{i}u(\alpha_{i}^{-}) = -D_{i+1}u'(\alpha_{i}^{+}) + 2\delta_{i+1}u(\alpha_{i}^{+}).
\end{cases}
\end{equation}
The exact solution is

\[u(x) =
\begin{cases}
 u_1 = x^n/3& x\in[0,1/3] \\
u_2 = x^{n+1} & x\in[1/3,2/3] \\
u_3 = 3(1-x)x^{n+1}& x\in[2/3,1].
\end{cases}
\]

The enrichment function $\psi$ is well-known \cite{Babuska1, Deng, ATTChou2021}:
\begin{equation}
\psi(x) =
\begin{cases}
0   &  x\in [0,x_{k}]\\
\dfrac{(x_{k+1}-\alpha)(x_{k}-x)}{x_{k+1}-x_{k}} &  x\in [x_{k},\alpha]  \label{eqn:lin}\\
\dfrac{(\alpha-x_{k})(x-x_{k+1})}{x_{k+1}-x_{k}} &  x\in [\alpha,x_{k+1}]\\
0   &  x\in [x_{k+1},1].
\end{cases}
\end{equation}
 The test results using linear, quadratic, and cubic elements are displayed in Table \ref{tab:4} and Figure \ref{fig:4}, respectively.

\begin{figure}[h]
\centering
\begin{subfigure}{.33\textwidth}
  \centering
  \includegraphics[width=\linewidth]{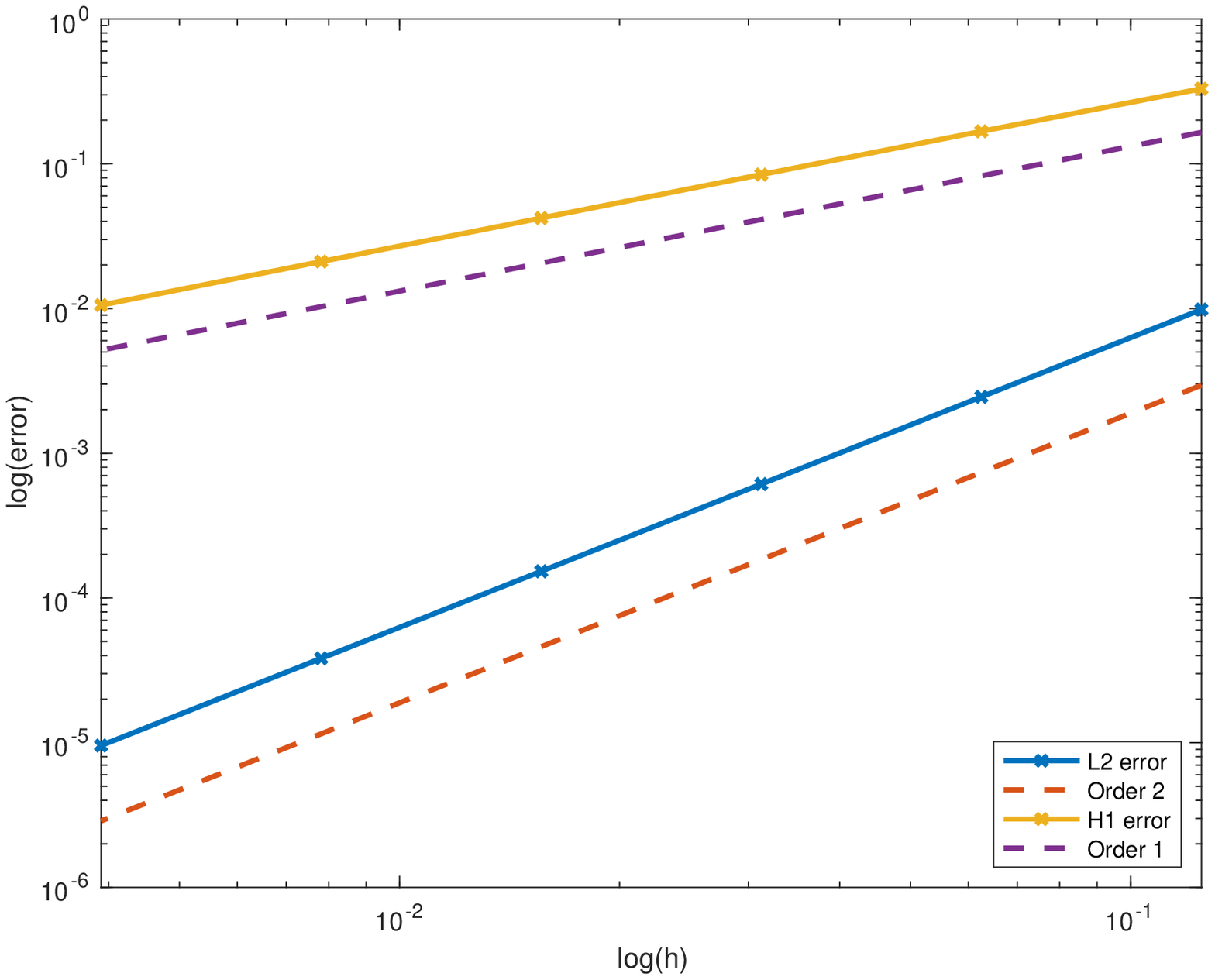}
  \caption{Linear}
  \label{fig:sub1}
\end{subfigure}%
\begin{subfigure}{.33\textwidth}
  \centering
  \includegraphics[width=\linewidth]{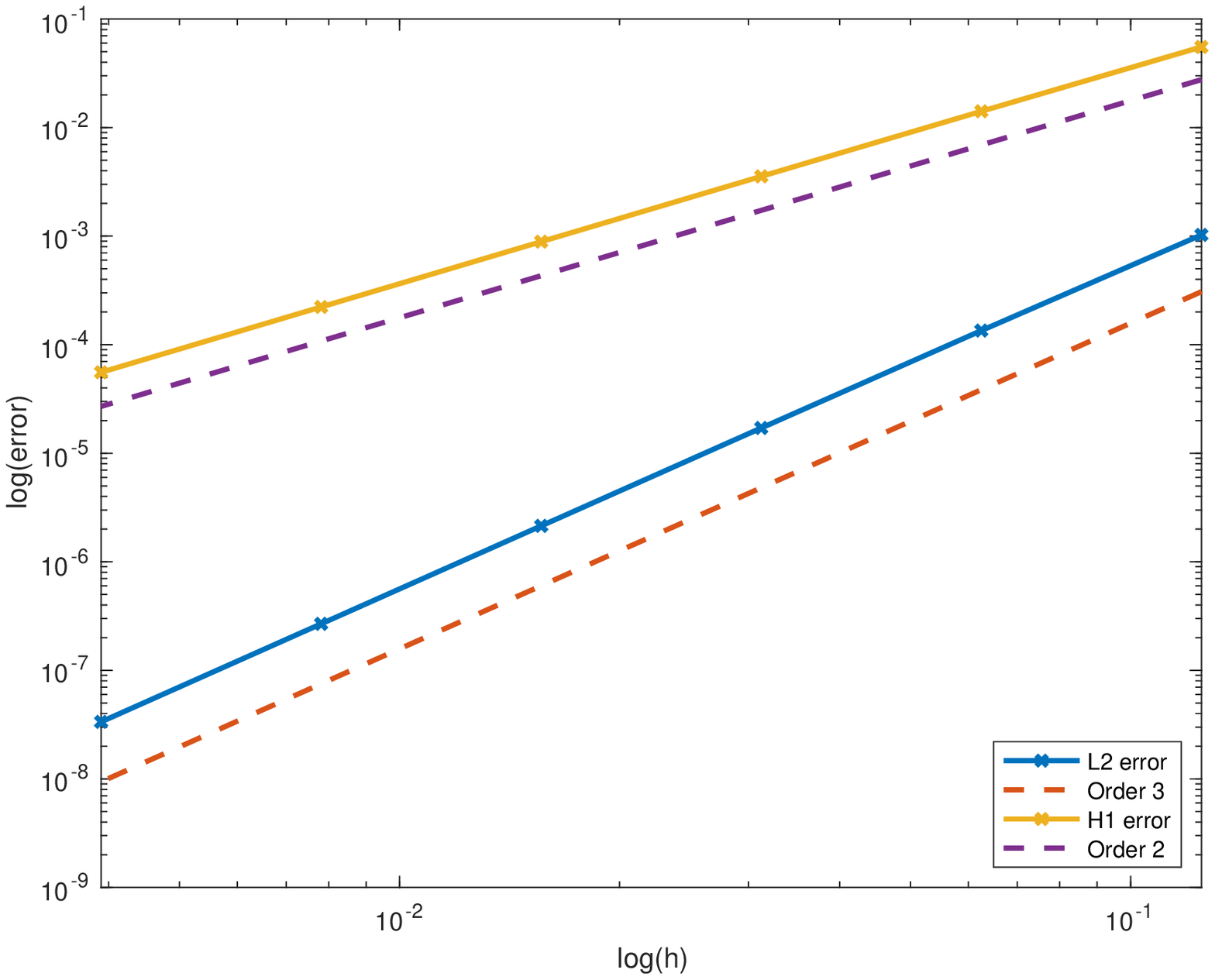}
  \caption{Quadratic}
  \label{fig:sub2}
\end{subfigure}
\begin{subfigure}{.33\textwidth}
  \centering
  \includegraphics[width=\linewidth]{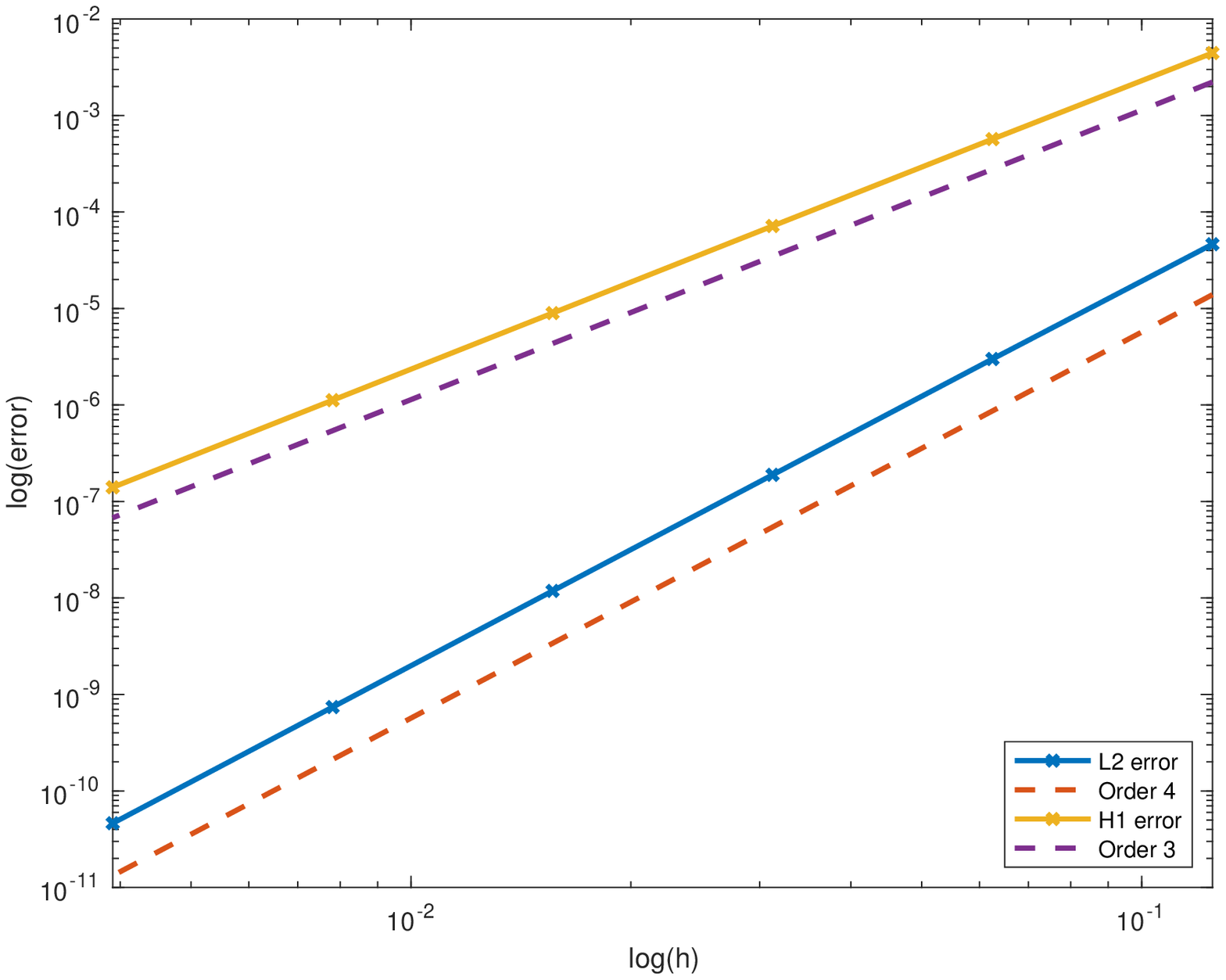}
  \caption{Cubic}
  \label{fig:sub2}
\end{subfigure}
\caption{$L_{2}$ and broken $H^{1}$ convergence rates for Problem 2 for linear, quadratic, and cubic basis functions}
\label{fig:4}
\end{figure}

\begin{table}
\begin{center}
\begin{tabular}{l cc cc cc}
\toprule
Problem 2 & \multicolumn{2}{c}{Linear} & \multicolumn{2}{c}{Quadratic} & \multicolumn{2}{c}{Cubic} \\
\cmidrule(lr){2-3} \cmidrule(lr){4-5} \cmidrule(lr){6-7}
 & Nodal error & SCN & Nodal error & SCN & Nodal error & SCN \\
 $ h=1/8 $ &	 	 2.92e-2 &		 1.11e3 &	 	 1.02e-3 &	 	 9.66e8 &	 	 1.16e-5 &		 9.80e6 \\
 \hline
$ h=1/16 $ &		 	 6.43e-3 &		 1.74e7 &		 5.81e-5 &	 	 1.82e7 &	 	 1.77e-7 &		 7.07e6 \\
 \hline
$ h=1/32 $ &	 	 1.53e-3 &		 4.78e7 &		 3.72e-6 &	 	 1.16e7 &	  2.85e-9 &		 1.79e7 \\
 \hline
$ h=1/64 $ &		 3.82e-4 &	 	 1.86e7 &		 2.31e-7 &	 	 8.15e6 &	 	 4.49e-11 &		 3.73e7 \\
 \hline
$ h=1/128 $ &	 	 9.51e-5 &	 	 3.94e7 &		 1.44e-8 &		 7.05e6 &	 9.76e-13 &	 	 2.90e9 \\
 \hline
$ h=1/256 $ &		 	 2.38e-5 &	 	 5.57e7 &		 9.01e-10 &		 7.14e6 &	 6.82e-13 &	 1.02e8 \\
\bottomrule
\end{tabular}
 \caption{$L^{2}$, broken $H^{1}$, nodal errors, and scaled condition numbers (SCN) with discontinuous jump conditions for Problem 2  for linear, quadratic, and cubic basis functions.}
 \label{tab:4}
\end{center}
\end{table}


{\bf Problem 3. Implicit and Explicit Conditions Both Present.} In this problem, we combine the interfaces of the last two problems. The interface points are $\alpha_{0} = 1/9$,
$\alpha_{1} = 1/3$ and $\alpha_{2} = 2/3$.
The two-point boundary value problem is
\begin{equation}\label{P3}
\frac{\partial}{\partial x}\left(-D\frac{\partial u}{\partial x} + 2\delta u \right) + \eta u = f \quad x\in(0,1)
\end{equation}
subject to the boundary conditions
\[
D_{0}u'(0) = 0, \qquad u(1) = 0.
\]
The coefficients are defined as follows:

\[
D(x) =
\begin{cases}
D_{0} = 1 & x\in [0,1/9] \\
D_1 = \frac{18(n-1)}{10n}, & x\in [1/9,1/3] \\
D_2 = \frac{6nD_1 - 2\delta}{3(n+1)}  & x\in [1/3,2/3] \\
D_3 = \frac{8\delta_2 - 3(n+1)D_2}{3(n+5)}  & x\in [2/3,1];
\end{cases}
\]

\[\delta(x)=
\begin{cases}
\delta_{0} = 0 & x\in[0,1/9] \\
 \delta = 0.5(9nD_1-8.1(n-1)) & x\in[1/9,1/3] \\
 \delta_2 = 0.5(3(n+1)D_2 - 3nD_1 + 2\delta)  & x\in[1/3,2/3] \\
\delta_3 = 0.25(3(n-1)D_3 - 3(n+1)D_2 + 4\delta_2) & x\in [2/3,1];
\end{cases}
\]
$n=4$ and $\eta =0, 10, 1, 0.1$ in respective subintervals.
The exact solution is

\[u(x) =
\begin{cases}
u_{0} = x^{n-1}/30 & [0,1/9]\\
 u_1 = x^n/3& [1/9,1/3] \\
u_2 = x^{n+1} & [1/3,2/3] \\
u_3 = 3(1-x)x^{n+1}& [2/3,1]
\end{cases}
\]
and it satisfies the jump condition (\ref{disContJump}) at $1/9$ and   (\ref{contJump}) at the interface points $1/3$ and $2/3$.
For the discontinuous interface point $1/9$ we use the enrichment function defined in \eqref{eqn:lin1} and for the continuous interface points $1/3$ and $2/3$ we use the enrichment function in (\ref{eqn:lin}). The test results using linear, quadratic, and cubic elements are displayed in Table \ref{tab:5} and Figure \ref{fig:5}, respectively.

\begin{figure}[h]
\centering
\begin{subfigure}{.33\textwidth}
  \centering
  \includegraphics[width=\linewidth]{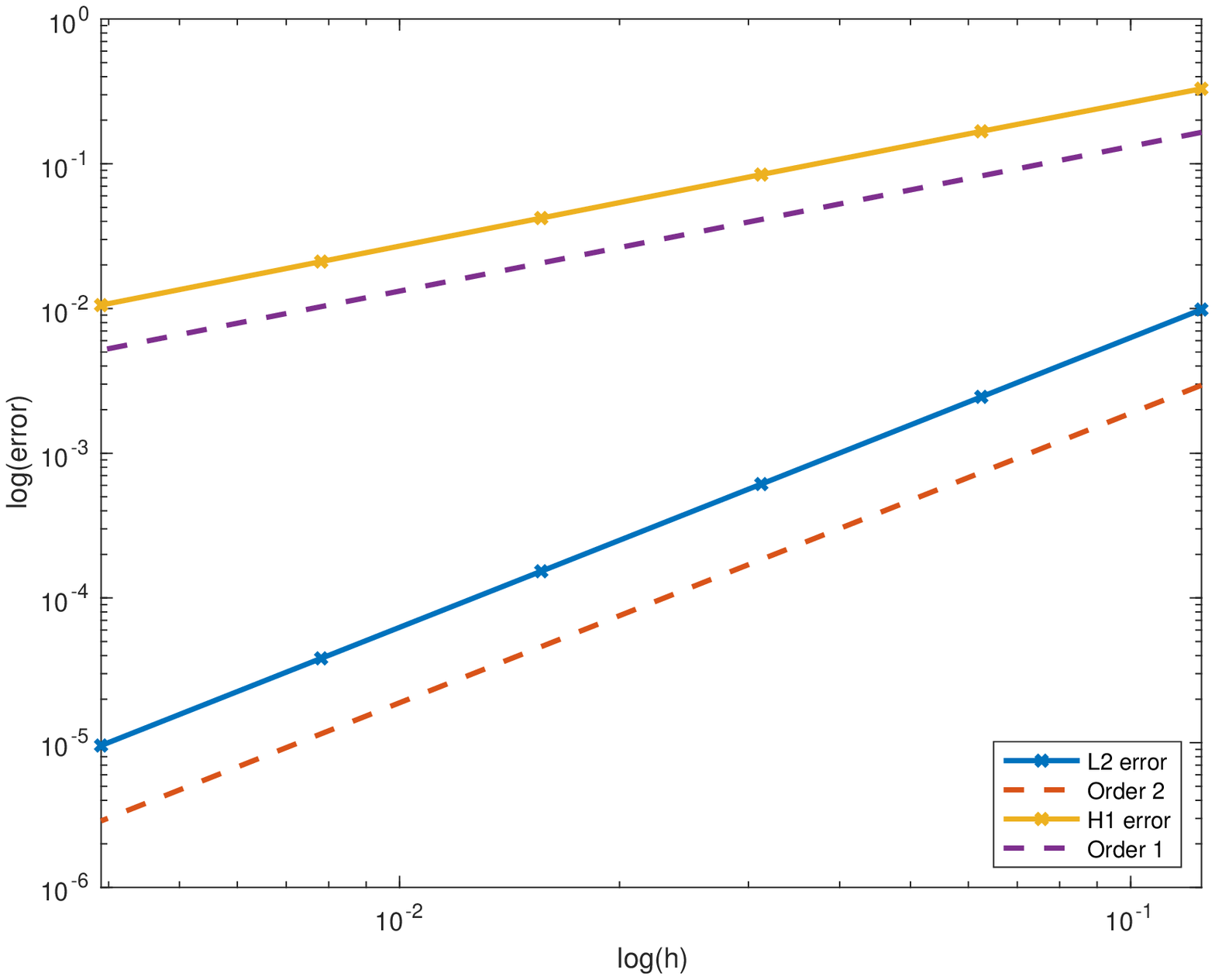}
  \caption{Linear}
  \label{fig:sub1}
\end{subfigure}%
\begin{subfigure}{.33\textwidth}
  \centering
  \includegraphics[width=\linewidth]{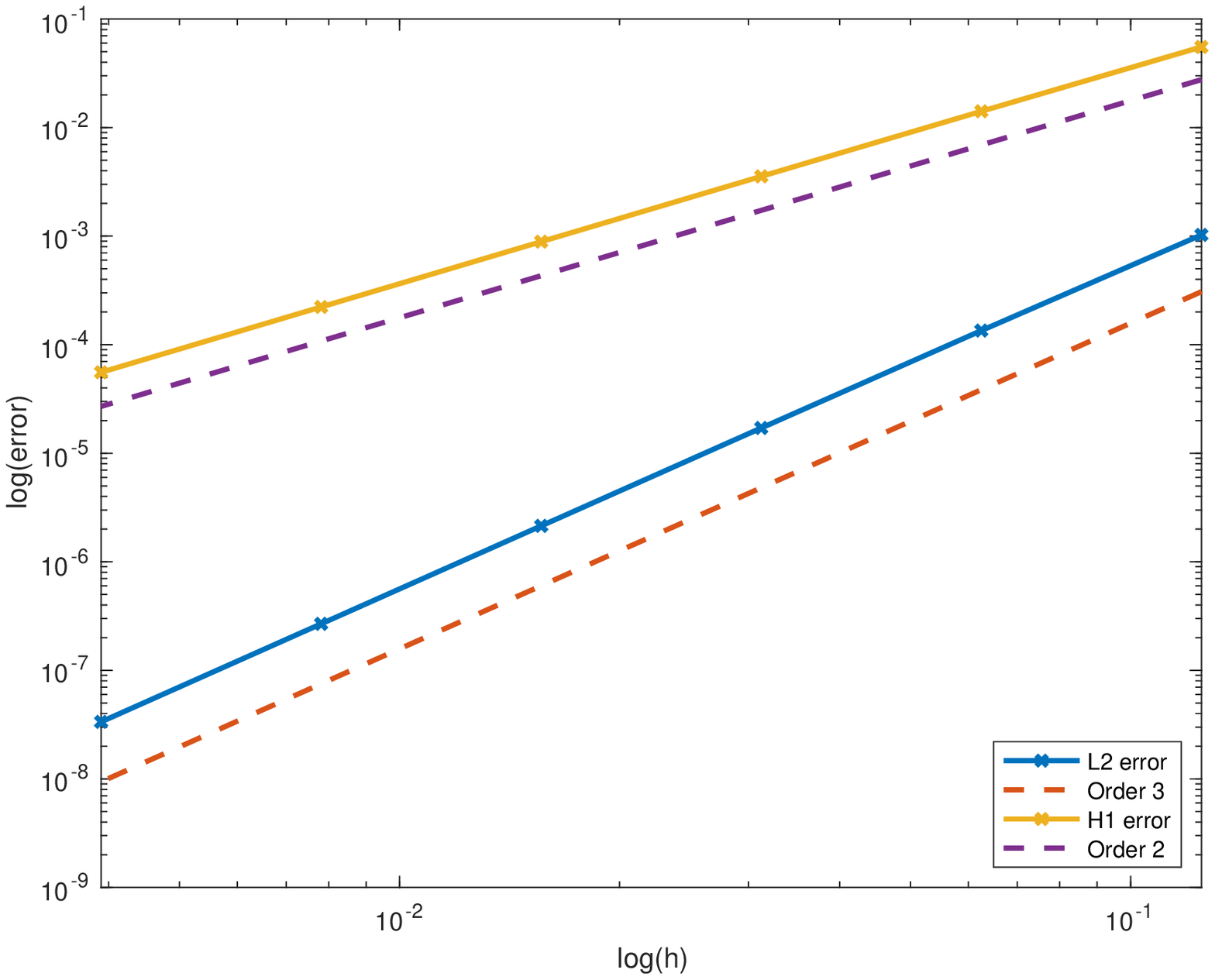}
  \caption{Quadratic}
  \label{fig:sub2}
\end{subfigure}
\begin{subfigure}{.33\textwidth}
  \centering
  \includegraphics[width=\linewidth]{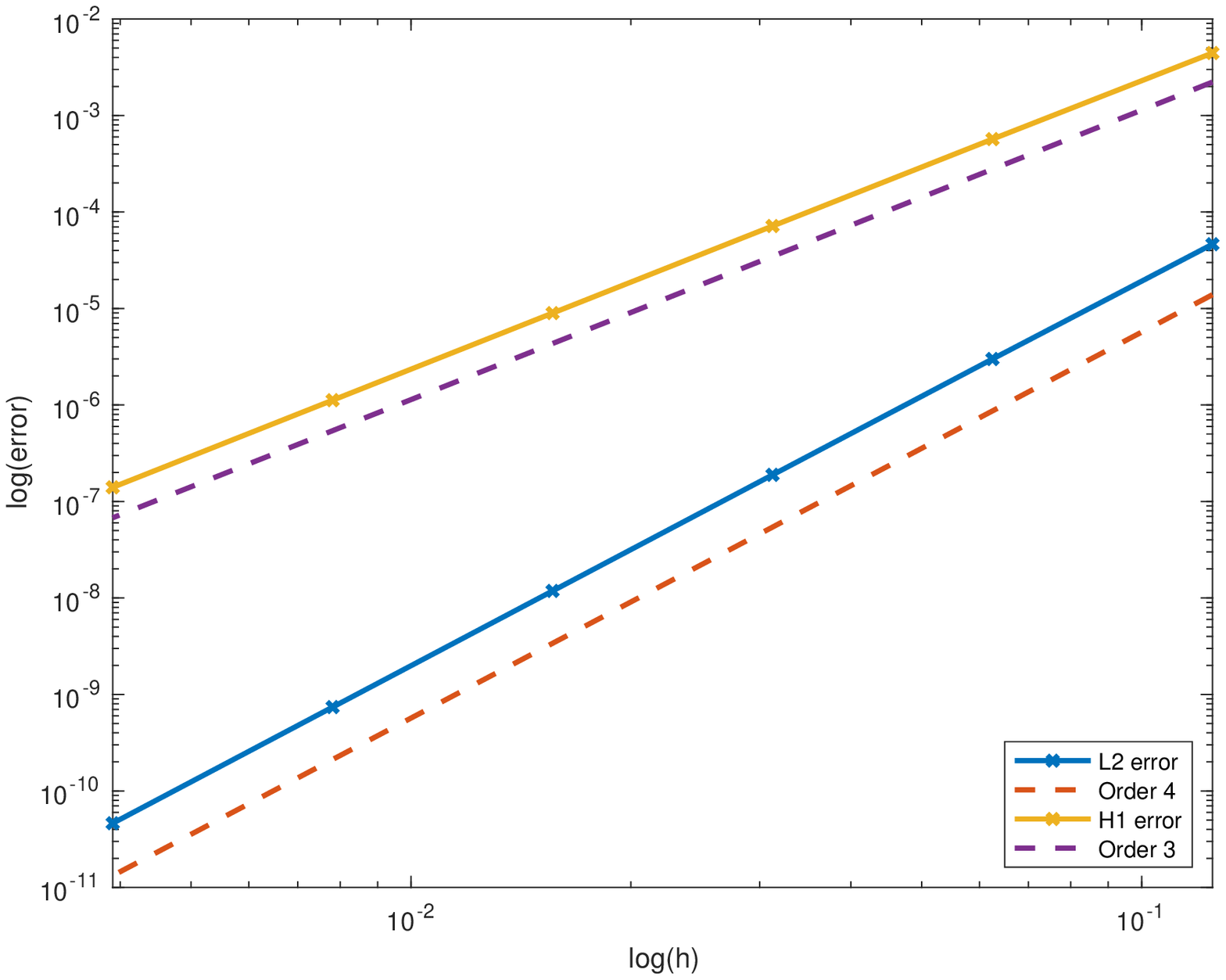}
  \caption{Cubic}
  \label{fig:sub2}
\end{subfigure}
\caption{$L_{2}$ and broken $H^{1}$ convergence rates for Problem 3 for linear, quadratic, and cubic basis functions}
\label{fig:5}
\end{figure}

\begin{table}
\begin{center}
\begin{tabular}{l cc cc cc}
\toprule
Problem 3 & \multicolumn{2}{c}{Linear} & \multicolumn{2}{c}{Quadratic} & \multicolumn{2}{c}{Cubic} \\
\cmidrule(lr){2-3} \cmidrule(lr){4-5} \cmidrule(lr){6-7}
 & Nodal error & SCN & Nodal error & SCN & Nodal error & SCN \\
$ h=1/8 $ &	 2.92e-2 &		 2.35e4 &		 1.01e-3 &		 4.62e8 &	 	 1.16e-5 &		 6.98e7 \\
 \hline
$ h=1/16 $ &	 6.43e-3 &	 	 7.18e6 &		 5.81e-5 &	 	 3.09e7 &	 	 1.77e-7 &	 	 1.83e7 \\
 \hline
$ h=1/32 $ &	 1.53e-3 &		 2.03e7  &		 3.72e-6 &	 	 1.04e7 &	 	 2.85e-9 &	 	 1.10e7 \\
 \hline
$ h=1/64 $ &		 3.82e-4 &		 2.13e7 &		 2.31e-7 &		 9.28e6  &	 	 4.49e-11 &	 	 1.59e7 \\
 \hline
$ h=1/128 $ &		 9.51e-5 &		 1.71e7 &		 1.44e-8 &	 	 4.62e6 &	 	 9.62e-13 &	 	 8.99e7 \\
 \hline
$ h=1/256 $ &	 	 2.38e-5 &		 3.08e7 &		 9.00e-10 &		 6.79e6  &		 8.70e-13 &		 1.71e8 \\
\bottomrule
\end{tabular}
 \caption{$L^{2}$, broken $H^{1}$, nodal errors, and scaled condition numbers (SCN) with discontinuous jump conditions for Problem 3  for linear, quadratic, and cubic basis functions.}
 \label{tab:5}
\end{center}
\end{table}

\newpage

\section{Concluding Remarks.}
In this paper, we extend the lower-order GFEM developed recently for interface problems
with discontinuous solutions to arbitrarily high-order elements in the spatial dimension.
The main challenge has been the fact that the enrichment function constructed for linear GFEM
is not sufficient to capture the discontinuous feature of the solution to the interface problem.
\rev{Herein, we introduce two enrichment functions locally on the interval containing the interface.
With these novel enrichment functions, we generalize the method to high-order elements.}
These two enrichment functions, however, introduce linear dependence among the basis functions.
We overcome this issue by removing, from the standard FEM space,
the bubble functions that are associated with the element containing the interface.
We establish optimal error estimates for arbitrary-order elements and
demonstrate the performance of the method with various numerical examples.

\rev{This work serves as preliminary development of GFEM for a larger class of interface problems as it is focused on 1D and an extension to a special case of 2D problems}.
A natural direction of future work is the extension to multiple \rev{dimensions with general interfaces such as curved ones}.
The challenge lies in the construction of enrichment functions such that the enriched space
has the optimal approximability while keeping the condition number from fast growth with respect to mesh size.
Another future work direction is the study of the related time-dependent interface problems.
Many real application interface problems are time-dependent, for example,
the precipitating quasigeostrophic equations for climate modeling.
\rev{Herein, the GFEM provides an alternative numerical solver of high-order accuracy, stability, and robustness.}


%

\bibliographystyle{siam}
\bibliography{HighOrder}

\begin{thebibliography}{10}

\bibitem{Ammari}
{\sc H.~Ammari, J.~Garnier, H.~Kang, M.~Lim, and S.~Yu}, {\em Generalized
  polarization tensors for shape description}, Numerische Mathematik, 126
  (2014), pp.~199--224.

\bibitem{ATTChou2021}
{\sc C.~Attanayake and S.-H. Chou}, {\em Superconvergence and flux recovery for
  an enriched finite element method.}, International Journal of Numerical
  Analysis \& Modeling, 18 (2021), pp.~656--673.

\bibitem{Babuska1}
{\sc I.~Babu{\v{s}}ka and U.~Banerjee}, {\em Stable generalized finite element
  method ({SGFEM})}, Computer methods in applied mechanics and engineering, 201
  (2012), pp.~91--111.

\bibitem{Babuska2}
{\sc I.~Babu{\v{s}}ka, U.~Banerjee, and K.~Kergrene}, {\em Strongly stable
  generalized finite element method: Application to interface problems},
  Computer Methods in Applied Mechanics and Engineering, 327 (2017),
  pp.~58--92.

\bibitem{babuvska2004generalized}
{\sc I.~Babu{\v{s}}ka, U.~Banerjee, and J.~E. Osborn}, {\em Generalized finite
  element methods—main ideas, results and perspective}, International Journal
  of Computational Methods, 1 (2004), pp.~67--103.

\bibitem{Belyt}
{\sc T.~Belytschko and L.~Black}, {\em Elastic crack growth in finite elements
  with minimal remeshing}, International Journal for Numerical Methods in
  Engineering, 45 (1999), pp.~601--620.

\bibitem{Bordas}
{\sc S.~P. Bordas, E.~Burman, M.~G. Larson, and M.~A. Olshanskii}, {\em
  Geometrically unfitted finite element methods and applications}, Lecture
  Notes in Computational Science and Engineering, 121 (2017), pp.~6--8.

\bibitem{Chou}
{\sc S.-H. Chou}, {\em An immersed linear finite element method with interface
  flux capturing recovery}, Discrete \& Continuous Dynamical Systems-B, 17
  (2012), p.~2343.

\bibitem{ChouAttan2}
{\sc S.-H. Chou and C.~Attananyake}, {\em Construction of discontinuous
  enrichment functions for enriched {FEM}'s for interface elliptic problems
  {1D}}, Journal of Computational and Applied Mathematics,  (to appear).

\bibitem{Deng}
{\sc Q.~Deng and V.~Calo}, {\em Higher order stable generalized finite element
  method for the elliptic eigenvalue and source problems with an interface in
  {1D}}, Journal of Computational and Applied Mathematics, 368 (2020),
  p.~112558.

\bibitem{Fries}
{\sc T.-P. Fries and T.~Belytschko}, {\em The extended/generalized finite
  element method: an overview of the method and its applications},
  International Journal for Numerical Methods in Engineering, 84 (2010),
  pp.~253--304.

\bibitem{guo2019group}
{\sc R.~Guo and T.~Lin}, {\em A group of immersed finite-element spaces for
  elliptic interface problems}, IMA Journal of Numerical Analysis, 39 (2019),
  pp.~482--511.

\bibitem{guo2018nonconforming}
{\sc R.~Guo, T.~Lin, and X.~Zhang}, {\em Nonconforming immersed finite element
  spaces for elliptic interface problems}, Computers \& Mathematics with
  Applications, 75 (2018), pp.~2002--2016.

\bibitem{Hahn}
{\sc D.~W. Hahn and M.~N. {\"O}zisik}, {\em Heat conduction}, John Wiley \&
  Sons, 2012.

\bibitem{he2013immersed}
{\sc X.~He, T.~Lin, Y.~Lin, and X.~Zhang}, {\em Immersed finite element methods
  for parabolic equations with moving interface}, Numerical Methods for Partial
  Differential Equations, 29 (2013), pp.~619--646.

\bibitem{jo2019recent}
{\sc G.~Jo and D.~Y. Kwak}, {\em Recent development of immersed {FEM} for
  elliptic and elastic interface problems}, Journal of the Korean Society for
  Industrial and Applied Mathematics, 23 (2019), pp.~65--92.

\bibitem{Kim}
{\sc J.~Ka{\v{c}}ur and R.~Van~Keer}, {\em A nondestructive evaluation method
  for concrete viods: frequency differential electrical impedance scanning},
  SIAM Journal on Applied Mathematics, 69 (2009), pp.~1759--1771.

\bibitem{kergrene2016stable}
{\sc K.~Kergrene, I.~Babu{\v{s}}ka, and U.~Banerjee}, {\em Stable generalized
  finite element method and associated iterative schemes; application to
  interface problems}, Computer Methods in Applied Mechanics and Engineering,
  305 (2016), pp.~1--36.

\bibitem{Kruit1}
{\sc P.~A. Krutitskii}, {\em The jump problem for the {H}elmholtz equation and
  singularities at the edges}, Applied Mathematics Letters, 13 (2000),
  pp.~71--76.

\bibitem{LI}
{\sc Z.~Li}, {\em The immersed interface method using a finite element
  formulation}, Applied Numerical Mathematics, 27 (1998), pp.~253--267.

\bibitem{LI:2006}
{\sc Z.~Li and K.~Ito}, {\em The immersed interface method: numerical solutions
  of PDEs involving interfaces and irregular domains}, SIAM, 2006.

\bibitem{Li2004}
{\sc Z.~Li, T.~Lin, Y.~Lin, and R.~C. Rogers}, {\em An immersed finite element
  space and its approximation capability}, Numerical Methods for Partial
  Differential Equations: An International Journal, 20 (2004), pp.~338--367.

\bibitem{Li2003}
{\sc Z.~Li, T.~Lin, and X.~Wu}, {\em New cartesian grid methods for interface
  problems using the finite element formulation}, Numerische Mathematik, 96
  (2003), pp.~61--98.

\bibitem{Moes}
{\sc N.~Mo{\"e}s, J.~Dolbow, and T.~Belytschko}, {\em A finite element method
  for crack growth without remeshing}, International journal for numerical
  methods in engineering, 46 (1999), pp.~131--150.

\bibitem{Pontrelli}
{\sc G.~Pontrelli and F.~de~Monte}, {\em A multi-layer wall model for coronary
  drug-eluting stents}, Int. J. Heat Mass Transf., 50 (2007), pp.~3658--3889.

\bibitem{wahlbin2006superconvergence}
{\sc L.~Wahlbin}, {\em Superconvergence in Galerkin finite element methods},
  Springer, 2006.

\bibitem{Wang}
{\sc H.~Wang, J.~Chen, P.~Sun, and F.~Qin}, {\em A conforming enriched finite
  element method for elliptic interface problems}, Applied Numerical
  Mathematics, 127 (2018), pp.~1--17.

\bibitem{Zhang1}
{\sc H.~Zhang, X.~Feng, and K.~Wang}, {\em Long time error estimates of{ IFE}
  methods for the unsteady multi-layer porous wall model}, Applied Numerical
  Mathematics, 156 (2020), pp.~303--321.

\bibitem{Zhang2}
{\sc H.~Zhang, T.~Lin, and Y.~Lin}, {\em Linear and quadratic immersed finite
  element methods for the multi-layer porous wall model for coronary
  drug-eluting stents.}, International Journal of Numerical Analysis \&
  Modeling, 15 (2018).

\bibitem{Zhang}
{\sc H.~Zhang and K.~Wang}, {\em Long-time stability and asymptotic analysis of
  the {IFE} method for the multilayer porous wall model}, Numerical Methods for
  Partial Differential Equations, 34 (2018), pp.~419--441.

\bibitem{zhang2021generalized}
{\sc J.~Zhang, Q.~Deng, and X.~Li}, {\em A generalized isogeometric analysis of
  elliptic eigenvalue and source problems with an interface}, Journal of
  Computational and Applied Mathematics,  (2021), p.~114053.

\bibitem{zhang2020stable}
{\sc Q.~Zhang and I.~Babu{\v{s}}ka}, {\em A stable generalized finite element
  method ({SGFEM}) of degree two for interface problems}, Computer Methods in
  Applied Mechanics and Engineering, 363 (2020), p.~112889.

\bibitem{zhang2019strongly}
{\sc Q.~Zhang, U.~Banerjee, and I.~Babu{\v{s}}ka}, {\em Strongly stable
  generalized finite element method ({SSGFEM}) for a non-smooth interface
  problem}, Computer Methods in Applied Mechanics and Engineering, 344 (2019),
  pp.~538--568.

\bibitem{zhang2020strongly}
\leavevmode\vrule height 2pt depth -1.6pt width 23pt, {\em Strongly stable
  generalized finite element method ({SSGFEM}) for a non-smooth interface
  problem {II}: A simplified algorithm}, Computer Methods in Applied Mechanics
  and Engineering, 363 (2020), p.~112926.

\end{thebibliography}

\appendix{} \label{appendix}

\section{Scales of Diagonal Entries}\label{scale_reason}
In this appendix, we examine the order of magnitude of the diagonal entries of the stiffness matrix for the linear element.
We focus on the linear case $p=1$. Let us calculate the diagonal entries of the stiffness matrix $\tilde A$ whose
   diagonal entries take the form of
   \begin{equation}\notag
     \tilde a_{ii}=a(\xi_i,\xi_i)=\int_I\beta\xi_i'\xi_i'dx+\frac{[\xi_i]_\alpha^2}{\lambda}:=a_{ii}+r_{ii}
     \end{equation}
with
   \[\{\xi_i\}:=   \{\phi_1,\phi_2,\ldots,\phi_{n-1},\phi_k\psi_0,\phi_{k+1}\psi_0,\phi_k\psi_1,\phi_{k+1}\psi_1\}.
   \]
    Note that the jump-related terms $r_{ii}=0$ except for $r_{jj}, j=n,n+1,n+2,n+3$.

    Assume the diffusion coefficient has only two values $\beta^-,\beta^+$ for the time being (to get the general result we can use
    $\beta_{\min}=\min_{x\in I}\{\beta^-(x),\beta^+(x)\}, \beta_{\max}=\max_{x\in I}\{\beta^-(x),\beta^+(x)\}$).
   Then
   \begin{equation*}
   a_{ii}=\int_0^1\beta \phi'_i\phi'_idx=
   \begin{cases}
   \frac{2\beta^-}{h}\quad &i=1,\ldots,k-1\\
   \frac{\beta^-}{h}+\frac{\beta^-}{h^2}(\alpha-x_k)+\frac{\beta^+}{h^2}(x_{k+1}-\alpha)\quad &i=k\\
   \frac{\beta^+}{h}+\frac{\beta^-}{h^2}(\alpha-x_k)+\frac{\beta^+}{h^2}(x_{k+1}-\alpha)\quad &i=k+1\\
   \frac{2\beta^+}{h},\quad &i=k+2,\ldots, n-1
   \end{cases}
   \end{equation*}
   and thus
   \begin{equation*}\label{overestimate1}
   \frac{2\beta_{\min}}{h}\leq a_{ii}\leq \frac{2\beta_{\max}}{h},\quad  1\leq i\leq n-1.
   \end{equation*}

   Thus, there holds
   \begin{align*}
   a_{nn}=\int_{x_k}^{x_{k+1}}\beta [(\phi_k\psi_0)']^2dx&=\beta^-m_1^2\int_{x_k}^\alpha(\phi_k'(x-x_k)+\phi_k)^2dx\\
   a_{n+1,n+1}=\int_{x_k}^{x_{k+1}}\beta [(\phi_{k+1}\psi_0)']^2dx&= \beta^-m_1^2\int_{x_k}^\alpha(\phi_{k+1}'(x-x_k)+\phi_{k+1})^2dx
  \end{align*}

   \begin{align*}
   a_{n+2,n+2}=\int_{x_k}^{x_{k+1}}\beta [(\phi_k\psi_1)']^2dx&=\beta^+m_2^2\int_\alpha^{x_{k+1}}(\phi_k'(x-x_{k+1})+\phi_k)^2dx\\
   a_{n+3,n+3}=\int_{x_k}^{x_{k+1}}\beta [(\phi_{k+1}\psi_1)']^2dx&= \beta^+m_2^2\int_\alpha^{x_{k+1}}(\phi_{k+1}'(x-x_{k+1})+\phi_{k+1})^2dx.
   \end{align*}

Note that

\begin{align*}&
\int_{x_k}^\alpha(\phi_k'(x-x_k)+\phi_k)^2dx=h_k\int_{0}^{(\alpha-x_k)/h_k}(2\hat x-1)^2d\hat x\\
\notag&=\frac{1}{6h_k^2}\left((2(\alpha-x_k)-h_k)^3+h_k^3\right)\\
\notag&=\frac{1}{6h_k^2}2(\alpha-x_k)\left((2\alpha-x_k-x_{k+1})^2-(2\alpha-x_k-x_{k+1})h_k+h_k^2\right)\\
\notag&\geq\frac{1}{6h_k^2}2(\alpha-x_k)\frac{1}{2}\left((2\alpha-x_k-x_{k+1})-h_k\right)^2\text{ by }a^2-ab+b^2\geq \frac{1}{2}(a-b)^2\\
\notag&=\frac{2}{3h_k^2}(\alpha-x_k)(\alpha-x_{k+1})^2.
\end{align*}

\begin{align*}
\label{m12}\int_{x_k}^\alpha(\phi_{k+1}'(x-x_k)+\phi_{k+1})^2dx=\frac{4}{3h_k^2}(\alpha-x_k)^3.
\end{align*}

Also, a simple calculation leads to
\begin{align*}
\int_\alpha^{x_{k+1}}(\phi_k'(x-x_{k+1})+\phi_k)^2dx&=\frac{4}{3h_k^2}(x_{k+1}-\alpha)^3
\end{align*}
\begin{align*}
&\int_\alpha^{x_{k+1}}(\phi_{k+1}'(x-x_{k+1})+\phi_{k+1})^2dx=h_k\int_{(\alpha-x_k)/h_k}^1(2\hat x-1)^2d\hat x\\
\notag&=\frac{1}{6h_k^2}\left((-2(\alpha-x_k)+h_k)^3+h_k^3\right)\\
\notag&=\frac{1}{6h_k^2}2(x_{k+1}-\alpha)\left((2\alpha-x_k-x_{k+1})^2+(2\alpha-x_k-x_{k+1})h_k+h_k^2\right)\\
\notag&\geq\frac{1}{6h_k^2}2(x_{k+1}-\alpha)\frac{1}{2}\left((2\alpha-x_k-x_{k+1})+h_k\right)^2 \, \text{ by }a^2+ab+b^2\geq \frac{1}{2}(a+b)^2\\
\notag&=\frac{2}{3h_k^2}(x_{k+1}-\alpha)(\alpha-x_{k})^2.
\end{align*}

Recall the jump-related terms $r_{ii}=0$ except for $r_{jj}, j=n,n+1,n+2,n+3$ and are of the lower order compared with $a_{ii}$ in magnitude due to the order of derivative:
\begin{equation*}
\frac{1}{\lambda}\cdot
\begin{cases}
[\phi_k\psi_0]_\alpha=m_1^2(\alpha-x_k)^2(\alpha-x_{k+1})^2h_k^{-2}\\
[\phi_{k+1}\psi_0]_\alpha=m_1^2(\alpha-x_k)^2(\alpha-x_{k})^2h_k^{-2},
\end{cases}
\end{equation*}
and
\begin{equation*}
\frac{1}{\lambda}\cdot
\begin{cases}
[\phi_k\psi_1]_\alpha=m_2^2(\alpha-x_{k+1})^2(\alpha-x_{k+1})^2h_k^{-2}\\
[\phi_{k+1}\psi_1]_\alpha=m_2^2(\alpha-x_k)^2(\alpha-x_{k+1})^2h_k^{-2}.
\end{cases}
\end{equation*}

From the above estimates, it is clear that $m_1=(\alpha-x_{k+1})/h_k,m_2=(\alpha-x_k/h_k)$ generate a system of condition numbers with a scale comparable to the system from the standard FEM.

\end{document}